\newcommand{\Z}{\ensuremath{\mathbbm{Z}}}
\newcommand{\R}{\ensuremath{\mathbbm{R}}}
\newcommand{\abs}[1]{\ensuremath{\left| #1 \right|}}
\renewcommand{\epsilon}{\varepsilon}
\def\qed{\hspace{\stretch1}\ensuremath\square}
\newtheorem{theorem}{Theorem}[section]
\newtheorem{lemma}[theorem]{Lemma}
\newtheorem{proposition}[theorem]{Proposition}
\theoremstyle{definition}
\newtheorem{remark}[theorem]{Remark}
\begin{document}

\title[uniqueness of solutions to random jigsaw puzzles]{A linear threshold for uniqueness of solutions to random jigsaw puzzles}
\author{Anders Martinsson}
\address{Institute of Theoretical Computer Science, ETH Z\"urich, 8092 Z\"urich, Switzerland}
\email{maanders@inf.ethz.ch}
\subjclass[2010]{60C05,05C10}

\begin{abstract}
We consider a problem introduced by Mossel and Ross [\textit{Shotgun assembly of labeled graphs}, \href{http://arxiv.org/abs/1504.07682}{arXiv:1504.07682}]. Suppose a random $n\times n$ jigsaw puzzle is constructed by independently and uniformly choosing the shape of each ``jig'' from $q$ possibilities. We are given the shuffled pieces. Then, depending on $q$, what is the probability that we can reassemble the puzzle uniquely? We say that two solutions of a puzzle are similar if they only differ by a global rotation of the puzzle, permutation of duplicate pieces, and rotation of rotationally symmetric pieces. In this paper, we show that, with high probability, such a puzzle has at least two non-similar solutions when $2\leq q \leq \frac{2}{\sqrt{e}}n$, all solutions are similar when $q\geq (2+\varepsilon)n$, and the solution is unique when $q=\omega(n)$.
\end{abstract}
\maketitle

\section{Introduction}

A \emph{jigsaw puzzle} is a collection of square pieces where each of the four edges of a piece has a shape, referred to as a \emph{jig}, so that it fits together with a subset of the edges of the other pieces. An \emph{edge-matching puzzle} is a collection of square pieces where each side of every piece is given a color. The goal of the respective puzzles is to assemble the pieces into a certain form, in this paper this will always be an $n\times n$ square, such that all pairs of adjacent pieces fit together. In the case of a jigsaw puzzle, this means that the jigs of edges that are aligned next to each other should have complementary shapes, and for an edge-matching puzzle, such pairs of edges should have the same color. Here we assume that the pieces are allowed to be rotated, but not flipped upside down.

In order to make this a bit more formal, we assume that there are $q$ possible types of jigs, enumerated from $1$ to $q$. A jig type can either be symmetric, so that it fits together with itself, or be part of a pair of complementary types that fit together with each other. We can describe this relation by fixing a map $\iota$ from $\{1, 2, \dots, q\}$ to itself such that $\iota \circ \iota = id$ and a jig of type $j$ fits with jigs of type $\iota(j)$. Note that we can consider an edge-matching puzzle as a special case of a jigsaw puzzle by taking $\iota$ equal to the identity map.

In a recent paper by Mossel and Ross \cite{MR15}, a simple model for random edge-matching puzzles, later generalized to random jigsaw puzzles in \cite{BFM16}, was proposed. We imagine that we start with an $n\times n$ grid of identical unit squares. For each of the four sides of each piece, we choose its color, or type of jig respectively, out of $q$ possibilities, under the restriction that pairs of connected sides must get the same color/complementary jig types. Note that this means that, unlike most real jigsaw puzzles, also edges along the boundary are assigned colors/jigs. We will refer to such an assignment of colors to a puzzle as a \emph{coloring}, and an assignment of jig types as a \emph{carving}. The underlying probability space is the set $\Omega$ of the $q^{2n(n+1)}$ possible colorings/carvings of the initial configuration of pieces. Mossel and Ross asked, suppose we choose $\omega\in\Omega$ uniformly at random, then what is the probability that the puzzle can be uniquely recovered from the collection of shuffled pieces? Further, how can this recovery be done efficiently? They called this problem \emph{shotgun assembly of a random jigsaw puzzle} due to its similarity to genetic shotgun sequencing, which is a technique for sequencing a long DNA strand by sampling short random subsequences.

The notion of ``unique recovery'' needs some elaboration. We consider a solution of the puzzle to be a positioning and orientation of the pieces into a fixed $n\times n$ grid such that all adjacent pieces fit together. The solution consisting of all original positions and orientations will be referred to as the \emph{planted assembly}. We here assume that, besides the choice of jigs/colors, all pieces are identical and rotationally symmetric. Hence, given the collection of shuffled jigsaw pieces, any solution to the puzzle is equally likely to be the planted assembly. As the pieces can be rotated, the best we could ever hope for is to be able to recover the planted assembly up to a global rotation of the puzzle. Besides that, the puzzle may contain duplicate pieces, that is, two pieces with identical colors/jigs, or rotationally symmetric pieces, that is, opposite sides have the same colors/jig types. Note that such pieces automatically give rise to additional, albeit not very different, solutions to the puzzle.

We say that two solutions of a puzzle are \emph{similar} if they only differ by a global rotation, permutation of duplicate pieces, and rotation of rotationally symmetric pieces, or, equivalently, if the solutions have the same coloring/carving up to global rotation. In the terminology of \cite{BFM16}, a puzzle has \emph{unique vertex assembly} (UVA) if the only solutions are the four global rotations of the planted assembly, and a puzzle has \emph{unique edge assembly} (UEA) if all solutions are similar.

It was shown by Mossel and Ross \cite{MR15} that, with high probability, a random edge-matching puzzle as above has at least two non-similar solutions when $2\leq q=o(n^{2/3})$, and has a unique solution up to global rotation when $q=\omega(n^2)$. Recently, two papers, Bordenave, Feige and Mossel \cite{BFM16}, and Nenadov, Pfister and Steger \cite{NPS16}, considering this problem were published on arxiv.org, both on May 11:th 2016, and both proving essentially the same result: for $q \geq n^{1+\varepsilon}$, a random edge-matching puzzle has a unique solution up to global rotation with high probability for any fixed $\varepsilon>0$, and for $q=o(n)$ there are duplicate pieces with high probability and hence multiple, but possibly all similar, solutions. It should be noted that the paper by Bordenave et al. assumes that the pieces are not allowed to be rotated, but remarks in the last section of the paper how their argument can be modified slightly, both to allow rotations and to generalize to the random jigsaw puzzle model above.

Concerning the problem of how to recover the planted assembly efficiently, Bordenave et al. describe an algorithm that recovers it with high probability when $q\geq n^{1+\varepsilon}$ with time complexity $n^{O(1/\varepsilon)}$. As a comparison, the general problems of finding one solution to a given $n\times n$ jigsaw puzzle or edge-matching puzzle are  known to be NP-complete \cite{DD07}, see also \cite{BDDHMY16+}. The problem also seems to be hard in practice. In the summer of 2007, a famous edge-matching puzzle, Eternity II, was released, with a \$$2$ million prize for the first complete solution \cite{M07}. This puzzle consists of $256$ square pieces that should be assembled into a $16\times 16$ square. There are in total $22$ edge colors, not including the boundary, which is marked in gray. The competition ended on 31 December 2010, with no solution being found, and at the time of writing, the puzzle is claimed to remain unsolved.

The aim of this paper is to prove the following result regarding uniqueness of the solution of a random jigsaw or edge-matching puzzle.
\begin{theorem} \label{thm:main} As $n\rightarrow \infty$ the following holds with high probability for a random jigsaw puzzle with $q$ types of jigs or random edge-matching puzzle with $q$ colors.
\begin{enumerate}[label=(\roman*)]
\item For $2\leq q \leq \frac{2}{\sqrt{e}}n$, there are at least two non-similar solutions.
\item For $q \geq (2+\varepsilon)n$, for any fixed $\varepsilon>0$, all solutions are similar.
\item For $q=\omega(n)$, the solution is unique up to global rotation.
\end{enumerate}
\end{theorem}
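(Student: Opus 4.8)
The plan is to analyse every candidate assembly through one device. Fix a \emph{placement} $\sigma$: an assignment of a piece together with an orientation to each of the $n^2$ grid positions, using each piece exactly once. For each of the $2n(n-1)$ adjacencies of the grid, the requirement that the two pieces placed across it fit is an equation $X_e=X_f$ between two of the $2n(n+1)$ edge-labels $X_\bullet$, which are independent and uniform on $\{1,\dots,q\}$; hence $\sigma$ is a valid solution with probability exactly $q^{-d(\sigma)}$, where $d(\sigma)$ is the rank of this linear system (equivalently $2n(n+1)$ minus the number of classes it induces on the labels). The planted assembly — and any assembly obtained from it by permuting duplicate pieces and rotating rotationally symmetric pieces — induces only trivial equations, so $d=0$; the three non-trivial global rotations of the planted assembly have $d=\Theta(n^2)$ and are $O(1)$ in number, hence negligible. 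Every other solution changes the coloring, so $d(\sigma)\ge1$, and one checks that a change on an edge $\{u,v\}$ forces both $u$ and $v$ into the \emph{difference set} $D(\sigma)$ of positions whose oriented piece changes, so $D(\sigma)$ spans an edge of the grid. The whole theorem then becomes a statement about $\sum_\sigma q^{-d(\sigma)}$ (for (ii) and (iii)) and about its second moment (for (i)).

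Part (iii) follows from (ii): if $q=\omega(n)$ then $q\ge(2+\varepsilon)n$ for large $n$, so all solutions are similar by (ii), while $\mathbb{E}[\#\text{duplicate pairs}]=O(n^4/q^4)\to0$ and $\mathbb{E}[\#\text{rotationally symmetric pieces}]=O(n^2/q^2)\to0$, so w.h.p. ``similar'' means ``a global rotation''. For part (ii) I would bound the expected number of non-similar solutions by $\sum_{r\ge1}N_r\,q^{-r}$, where $N_r$ is the number of placements with $d(\sigma)=r$. Two structural facts drive the estimate. First, $N_r=0$ for $1\le r<cn$ for some constant $c>0$: a coloring-changing placement must have $d(\sigma)=\Omega(n)$, because on any smaller scale the family of moved pieces is ``generic'' and the constraints from the unchanged neighbours propagate inward to pin every moved piece to its planted position. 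Second, $N_r\le(q/(1+\delta))^{r}$ for a suitable $\delta=\delta(\varepsilon)>0$ whenever $q\ge(2+\varepsilon)n$ — placements with few independent constraints must be highly structured (cyclic shifts of a block of rows or of columns, and compositions of these, are the prototypes) and are therefore few; this bound is essentially tight at the top, where $N_{2n(n-1)}\approx n^2!\,4^{n^2}$, so it forces $q/(1+\delta)>\tfrac{2}{\sqrt e}n$, which is why the argument stops short of $\tfrac{2}{\sqrt e}n$ and is carried out comfortably at $(2+\varepsilon)n$. Combining the two facts, $\sum_r N_r q^{-r}\le\sum_{r\ge cn}(1+\delta)^{-r}\to0$.

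Part (i) cannot be done by a first moment: a rearrangement of $k$ pieces is counted about $k!\approx(k/e)^k$ times but is valid with probability only $q^{-\Theta(k)}$, and $(k/e)^k$ overtakes $q^{\Theta(k)}\approx n^{\Theta(k)}$ only when $k=\Theta(n^2)$, so the extra solution is essentially a full rearrangement and must be exhibited. I would run the second moment method on $Z=\#\{\text{non-similar solutions}\}$. The Stirling computation gives $\mathbb{E}[Z]\ge(1-o(1))\,n^2!\,4^{n^2}q^{-2n(n-1)}$, the contribution of generic full rearrangements, and this tends to infinity precisely when $q\le\frac{2}{\sqrt e}n$, since then $4n^2/(eq^2)\ge1$. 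It therefore suffices to show $\mathbb{E}[Z^2]=(1+o(1))\mathbb{E}[Z]^2$. Here $\mathbb{E}[Z^2]=\sum_{\sigma_1,\sigma_2}q^{-d(\sigma_1,\sigma_2)}$, with $d(\sigma_1,\sigma_2)$ the rank of the union of the two linear systems: the diagonal $\sigma_1=\sigma_2$ contributes $\mathbb{E}[Z]=o(\mathbb{E}[Z]^2)$, the pairs with disjoint systems reproduce $\mathbb{E}[Z]^2$, and the remaining pairs are the main obstacle. For those one must bound the deficiency $d(\sigma_1)+d(\sigma_2)-d(\sigma_1,\sigma_2)$ of the combined system in terms of the ``overlap'' of $\sigma_1$ and $\sigma_2$ on the grid — morally, by the number of positions on which they agree up to a shared local move — and then verify that summing $q^{-d(\sigma_1,\sigma_2)}$ over all such pairs inflates $\mathbb{E}[Z]^2$ by only a factor $1+o(1)$; this overlap analysis is where essentially all of the work lies.
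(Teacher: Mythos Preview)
Your reduction of part (iii) to (ii) plus a duplicate/symmetric-piece count matches the paper exactly. The rest of the proposal, however, has genuine gaps in both remaining parts.

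For part (ii), your first structural fact is false as stated. Rotate a single interior $2\times2$ block by $180^\circ$: this placement has $d(\sigma)=8$ (the eight boundary half-edges of the block give eight independent equations), yet when those eight equations happen to be satisfied the interior edges of the block are unconstrained, so the resulting coloring is generically different from the planted one. Thus there are coloring-changing placements with bounded rank, and $N_r$ is not zero for small $r$; what you actually need is that their \emph{contribution} is small, which is a different and much more delicate statement. Your second structural fact, $N_r\le(q/(1+\delta))^r$, is the entire content of the theorem and you give no argument for it. (Incidentally, the three global rotations have $d=0$, not $\Theta(n^2)$: adjacencies are preserved under a rigid rotation.) The paper does not attempt a global first moment over complete assemblies at all. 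It observes that a complete assembly has $\Theta(n^2)$ edges but only $O(n)$ jig types, so the dependency structure is uncontrollable globally; instead it restricts to $k\times k$ windows with $k=o(n^{1/12})$, introduces a ``shape multiplicity'' invariant to bound how far $q^{-d}$ can exceed $q^{-E}$ on such windows, and runs a union bound over possible \emph{contour graphs} (connected subgraphs of $\mathbb{Z}^2$) with an explicit enumeration lemma and a three-case geometric analysis. Your rank parameter $d(\sigma)$ is the right probability exponent, but counting placements by $d$ has no combinatorial handle; counting by contour shape does.

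For part (i), the paper's proof is a ten-line counting argument with no moments at all: if a carving $\omega$ has UEA then at most four carvings share its box (the four rotations), so $|\Omega_{UEA}|\le4\cdot\#\{\text{boxes}\}$, and $\#\{\text{boxes}\}=\binom{(q^4+q^2+2q)/4+n^2-1}{n^2}$ is small enough that $\mathbb{P}(UEA)=|\Omega_{UEA}|/q^{2n(n+1)}\to0$ for $q\le\frac{2}{\sqrt e}n$. Your second-moment route may or may not succeed---you yourself flag the overlap analysis as ``where essentially all of the work lies'' and do not attempt it---but even if it does, it is an enormously harder path to the same conclusion, and second-moment methods for solution counts of random CSPs are notoriously prone to failure from clustering, which is certainly present here (any solution has many nearby solutions obtained by swapping duplicates). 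The counting-by-boxes argument sidesteps all of this.
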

We remark that a weaker form of $(i)$ was first proved in a earlier version of this paper \cite{M16}. This will be shown again in this paper, but using a significantly simpler argument.

The question remains what happens in the interval $\frac{2}{\sqrt{e}} n\leq q \leq 2n$. To try to get a qualitative understanding for this range, we can compare our random model to a collection of $n^2$ independently colored/carved pieces where each color/jig type is chosen uniformly at random, that is, without a planted solution. There are $4^{n^2} (n^2)!$ ways to place and orient the pieces into an $n\times n$ grid, and the probability that the pieces fit together in a given configuration is $q^{-2n(n-1)}$. Hence the expected number of solutions of such a puzzle is
$ \frac{4^{n^2} (n^2)!}{q^{2n(n-1)}} \approx \left(\frac{2n}{\sqrt{e} q} \right)^{2n^2}.$ We see that there is a transition at $q=\frac{2}{\sqrt{e}}n$ where the expected number of solutions goes from being very large to very small.

Connecting this back to our model, the fact that we force the puzzle to have at least one solution may increase the probability that other ways to assemble the pieces are also solutions. On the other hand, based on the proofs in this article, it is my suspicion that typical solutions are either similar to the planted assembly, or have very little similarity to it, and hence this effect should be small. Because of this, I conjecture that the event that all solutions to a random jigsaw or edge-matching puzzle are similar has a sharp threshold at $\frac{2}{\sqrt{e}}n+o(n)$. Moreover, considering how strongly the estimates for $\mathbb{P}(UEA)$ in Section \ref{sec:twooversqrte} depend on $q$, I believe that this threshold is very sharp. It might even jump directly from $o(1)$ to $1-o(1)$ when increasing the number of colors/jig types by $1$.

Using a similar heuristic, we can explain the discrepancy between the bounds in parts $(i)$ and $(ii)$ of Theorem \ref{thm:main} (the factor $\sqrt{e}$). In the proof of $(i)$ we consider the entire assembled puzzle, so, heuristically, random solutions should stop dominating the analysis at $q=\frac{2}{\sqrt{e}}n$. On the other hand, the proof of $(ii)$ is based on considering local solutions to the puzzle. The expected number of ways to build, say, a $k\times k$ square of matching pieces out of $n^2$ independently chosen pieces for $k=o(n)$ is roughly $\frac{ 4^{k^2} (n^2)^{k^2} }{q^{2k(k-1)}}\approx \left(\frac{2n}{q}\right)^{2k^2}$. Hence, in this case, solutions unrelated to the planted assembly should stop dominating the analysis only at $q=2n$. It appears that new ideas are needed to close this gap in the main result.

The remainder of the paper will be structured as follows. In Section \ref{sec:twooversqrte} we give a proof of part $(i)$ of Theorem \ref{thm:main}. Section \ref{sec:dupes} briefly investigates the probability of duplicates and rotationally symmetric pieces, which shows that $(ii)\implies (iii)$. Finally, the proof of part $(ii)$ is split into Sections \ref{sec:allsimilar1} and \ref{sec:allsimilar2}. All aforementioned sections will be formulated in terms of the random jigsaw puzzle model, but as we already noted, the random edge-matching puzzle can be considered as a special case of this.

\section{Proof of Theorem 1.1, part $(i)$}\label{sec:twooversqrte}

The statement follows from a counting argument. Let us refer to the unordered collection of jigsaw pieces of a puzzle as the \emph{box} of the puzzle. That is, the box contains the information of how many pieces of each combination of jig types there are in the puzzle, but no information beyond that about their locations or orientations in the planted assembly. The central observation is that, in the range $2\leq q \leq \frac{2}{\sqrt{e}}$, there are many fewer possible boxes than possible carvings. Thus, a typical box has many solutions.

In order to make this more precise, let $\Omega_{UEA}$ be the set of carvings for which the puzzle has a unique edge assembly. Then for any $\omega\in\Omega_{UEA}$, there are at most $3$ more carvings that yield the same box, namely the $90^\circ, 180^\circ$ and $270^\circ$ rotations of $\omega$. Thus $\abs{\Omega_{UEA}}$ is at most $4$ times the number of possible boxes.

There are $q^4$ ways to choose the jigs of a jigsaw piece: $q$ of which being invariant under a $90^\circ$ rotation, $q^2-q$ being invariant under a $180^\circ$ but not a $90^\circ$ rotation, and the remaining $q^4-q^2$ having no rotational symmetry. Hence there are $\frac{q^4-q^2}{4}+\frac{q^2-q}{2}+q = \frac{q^4+q^2+2q}{4}$ possible types of jigsaw pieces. From this it follows that the total number of boxes (including those without solutions) is ${ \frac{1}{4}(q^4+q^2+2q) + n^2-1 \choose n^2}$.

As the probability of each carving is $q^{-2n(n+1)}$, it follows that
\begin{align*}
\mathbb{P}(UEA) &=  q^{-2n(n+1)}\cdot \abs{\Omega_{UEA}} \leq q^{-2n(n+1)} \cdot 4 { \frac{1}{4}(q^4+q^2+2q) + n^2-1 \choose n^2}\\
&\leq 4 q^{-2n(n+1)} \frac{ \left( \frac{1}{4}(q^4+q^2+2q) + n^2-1 \right)^{n^2}}{ n^2!}\\
&= \Theta\left(\frac{1}{n}\right) q^{-2n} \left( \frac{e (q^4+q^2+2q+4n^2-4)}{4 q^2 n^2}\right)^{n^2},
\end{align*}
where we used Stirling's formula in the last step. One can observe that $$\frac{e (q^4+q^2+2q+4n^2-4)}{4 q^2 n^2}$$ is convex in $q$, equals $\frac{e}{4}+O(\frac{1}{n^2})$ for $q=2$ and $1+O(\frac{1}{n^2})$ for $q=\frac{2}{\sqrt{e}}n$. Hence, for any $2\leq q \leq \frac{2}{\sqrt{e}}n$,
\begin{align*}
\mathbb{P}(UEA) \leq \Theta\left(\frac{1}{n}\right) q^{-2n} \left(1+O\left(\frac{1}{n^2}\right)\right)^{n^2} = \Theta\left(\frac{1}{n}\right) q^{-2n},
\end{align*}
which tends to $0$ as $n\rightarrow\infty$. \qed

\section{Proof of Theorem 1.1, $(ii)$ implies $(iii)$}\label{sec:dupes}

It is natural to consider uniqueness of the solution as the intersection of two events: the event that all solutions of the puzzle are similar, and the event that the puzzle does not contain duplicate or rotationally symmetric pieces. The former is characterized by parts $(i)$ and $(ii)$ of Theorem \ref{thm:main}, and, as stated in the introduction, I conjecture that it has a very sharp threshold at $q=\frac{2}{\sqrt{e}}n+o(n)$. It remains to consider the latter event.

\begin{proposition}\label{prop:dupes}
The probability that a random jigsaw puzzle contains either duplicate or rotationally symmetric pieces is $o(1)$ for $q=\omega(n)$.
\end{proposition}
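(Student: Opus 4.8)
The plan is to use a first-moment (union bound) argument on the number of ``bad'' pairs of pieces. There are $n^2$ pieces and each piece is carved with $4$ jig types. The two events to control are: \textbf{(a)} some piece is rotationally symmetric, i.e.\ its two pairs of opposite sides carry matching jig types (type $j$ opposite type $\iota(j)$), and \textbf{(b)} some two distinct pieces are duplicates, i.e.\ one is obtainable from the other by a rotation (including the trivial rotation) with all four corresponding jigs agreeing. I would first record the relevant single-piece and single-pair probabilities: a fixed piece is rotationally symmetric with probability $\Theta(1/q^2)$ (each of the two opposite-side conditions holds with probability $1/q$, and they are independent since the four sides are carved independently), and a fixed \emph{ordered} pair of distinct pieces matches under a \emph{fixed} rotation with probability $q^{-4}$ (four independent side-conditions, each of probability $1/q$).

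The second step is the union bound. For (a), the expected number of rotationally symmetric pieces is $n^2 \cdot \Theta(1/q^2) = \Theta(n^2/q^2)$, which is $o(1)$ precisely when $q = \omega(n)$. For (b), there are $\binom{n^2}{2} < n^4/2$ unordered pairs and $4$ rotations to consider, so the expected number of duplicate pairs is at most $4 \binom{n^2}{2} q^{-4} = O(n^4/q^4)$, which is again $o(1)$ when $q = \omega(n)$ — in fact with a large margin. By Markov's inequality, the probability that either a rotationally symmetric piece or a duplicate pair exists is at most the sum of these two expectations, hence $o(1)$. One mild subtlety is that the carvings of the pieces are \emph{not} fully independent in the planted-assembly model: glued edges share a jig, so the $2n(n+1)$ jig variables are independent but the $4n^2$ side-labels of the pieces are not. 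However, the side-labels of any single piece are genuinely independent of each other, and the joint law of the side-labels of any fixed pair of pieces that are \emph{not} adjacent in the grid is a product of $8$ independent uniform variables; for the $O(n^2)$ adjacent pairs the shared edge only makes a match \emph{less} likely (it forces $j = \iota(j)$ on the shared edge, probability $\le 1/q$, before the other six conditions), so the bound $q^{-4}$ per rotation still applies. I would state this carefully but briefly.

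The main obstacle — really the only thing requiring any care — is bookkeeping the rotational symmetries correctly so as not to double-count or miscount: a piece has $q^4$ possible carvings, and when comparing two pieces ``up to rotation'' one must decide whether to sum over all $4$ rotations of one piece against the fixed other piece (the cleaner choice) and accept a harmless overcount when a piece is itself rotationally symmetric. Since we are only after an $o(1)$ upper bound, all such overcounts cost at most a constant factor and are absorbed. I would therefore organize the proof as: (1) compute $\mathbb{P}(\text{fixed piece rotationally symmetric}) = O(1/q^2)$; (2) compute $\mathbb{P}(\text{fixed ordered pair agree under fixed rotation}) \le q^{-4}$, noting the independence/adjacency remark; (3) union-bound to get the expected counts $O(n^2/q^2)$ and $O(n^4/q^4)$; (4) conclude both are $o(1)$ when $q = \omega(n)$, and hence so is the probability that the puzzle contains a duplicate or rotationally symmetric piece. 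Combined with part $(ii)$, a puzzle with $q \ge (2+\varepsilon)n$ then has, with high probability, all solutions similar \emph{and} no duplicate or symmetric pieces, so the only solutions are the four global rotations of the planted assembly — which is exactly part $(iii)$ in the regime $q = \omega(n)$.
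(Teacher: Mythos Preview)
Your approach is essentially identical to the paper's: a first-moment bound on the number of rotationally symmetric pieces and of duplicate pairs, followed by Markov's inequality. However, your treatment of adjacent pairs contains an error. You assert that for adjacent pieces the shared edge ``only makes a match less likely'' and that the bound $q^{-4}$ per rotation still applies. This is false. Take the edge-matching case $\iota=\mathrm{id}$ and two horizontally adjacent pieces $A,B$, so that $B$'s left side equals $A$'s right side. Under the $180^\circ$ rotation the duplicate condition asks precisely that $A$'s right side equal $B$'s left side on that pair --- which is automatic --- leaving only three independent $1/q$ constraints on the remaining six sides; the matching probability for this rotation is $q^{-3}$, not $\le q^{-4}$. (Your parenthetical ``it forces $j=\iota(j)$, probability $\le 1/q$'' is exactly where this breaks: when $\iota=\mathrm{id}$ that event has probability $1$.)

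The paper accounts for this correctly by bounding the duplicate probability for an adjacent pair by $O(q^{-3})$ rather than $q^{-4}$, so that $\mathbb{E}X = \Theta(n^4 q^{-4}) + O(n^2 q^{-3})$. Since $n^2 q^{-3}=o(1)$ for $q=\omega(n)$, your conclusion is unaffected; only the justification for adjacent pairs needs to be replaced by the $O(q^{-3})$ bound.
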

\begin{proof}
Let $X$ denote the number of pairs of duplicate jigsaw pieces, and $Y$ the number of rotationally symmetric pieces respectively. The probability that two given jigsaw pieces have identical jig types is  $\Theta(q^{-4})$ if the pieces are non-adjacent in the planted assembly, and $O(q^{-3})$ if they are adjacent. Hence, $$\mathbb{E} X = \Theta(n^4 q^{-4})+O(n^2q^{-3}).$$ Similarly, the probability that a jigsaw piece has rotational symmetry is $q^{-2}$, which implies that $$\mathbb{E} Y = \Theta(n^2 q^{-2}).$$

By Markov's inequality it follows that $$\mathbb{P}(X+Y\neq 0) \leq  \Theta(n^4 q^{-4})+O(n^2q^{-3}) + \Theta(n^2 q^{-2}),$$
which tends to $0$ for $q=\omega(n)$. \end{proof}

By part $(ii)$ of Theorem \ref{thm:main}, all solutions of a random jigsaw puzzle are similar with high probability when $q=\omega(n)$. Part $(iii)$ of Theorem \ref{thm:main} follows by combining this with Proposition \ref{prop:dupes}. \qed

\begin{remark}
Considering the estimates for $\mathbb{E}X$ and $\mathbb{E} Y$ further, one would expect the probability of $X=Y=0$ to be bounded away from $0$ and $1$ for $q=\Theta(n)$. In particular, this would mean that the probability that a random jigsaw puzzle has a unique solution, up to global rotation, is bounded away from $0$ and $1$ when $(2+\varepsilon)n\leq q = O(n)$. For the sake of brevity, we will not attempt to prove this here. We can however note that a partial result to this effect was shown in Section 3 of \cite{NPS16}, namely that
$$\mathbb{P}(X=0) \leq \exp\left( -\frac{n^4-2n^2}{8q^4}\right),$$
which implies that the probability of a unique solution is bounded away from $1$ for $q=O(n)$. In particular, the bound $q=\omega(n)$ in part $(iii)$ of Theorem \ref{thm:main} is sharp.
\end{remark}

\section{Preliminaries for proof of Theorem 1.1, part $(ii)$}\label{sec:allsimilar1}

Let us start by defining some concepts. A \emph{complete assembly} of an $n\times n$ puzzle is a positioning and orientation of the jigsaw pieces into an $n\times n$ grid. We will formally consider this as a bijective map from $\{1, \dots, n\}^2$ to itself together with a map $\{1, \dots, n\}^2\rightarrow \Z_4$, interpreted as the position of and orientation of each piece relative to the planted assembly. Similarly, a \emph{partial assembly} is a positioning and orientation of a subset of pieces in the jigsaw puzzle into a square grid. We remark that an assembly itself is not random -- it just represents a reordering and rotation of (some of) the pieces, irrespective of how the pieces may be carved. One important example of a partial assembly is a \emph{$k\times k$ window} obtained by picking an appropriate square from a complete assembly.

We will consider a jigsaw piece to be a vertex with four cyclically ordered \emph{half-edges}, representing the sides of the piece. For a given (complete or partial) assembly $A$, we say that two half-edges are \emph{connected in $A$} if they correspond to sides of two different jigsaw pieces that are aligned next to each other in the assembly. Thus, any assembly $A$ can be considered as a graph by joining connected half-edges into edges. For an assembly $A$ and $\omega\in \Omega$, we say that \emph{$A$ is feasible with respect to $\omega$} if first assigning the planted assembly the carving $\omega$ and then reassembling the pieces according to $A$ (while keeping the carvings of the individual pieces) means that all connected pairs of half-edges get complementary jig types.

For any assembly $A$, we have a natural notion of a dual graph. Considering the assembly geometrically in the plane, the vertices in this graph are the common corners of at least two jigsaw pieces, and the edges are the common sides of two pieces. Hence the edges of the dual graph correspond to the connected pairs of half-edges in $A$. See Figure \ref{fig:dualgraph}.

\begin{figure}
\centering
\includegraphics[scale=.7]{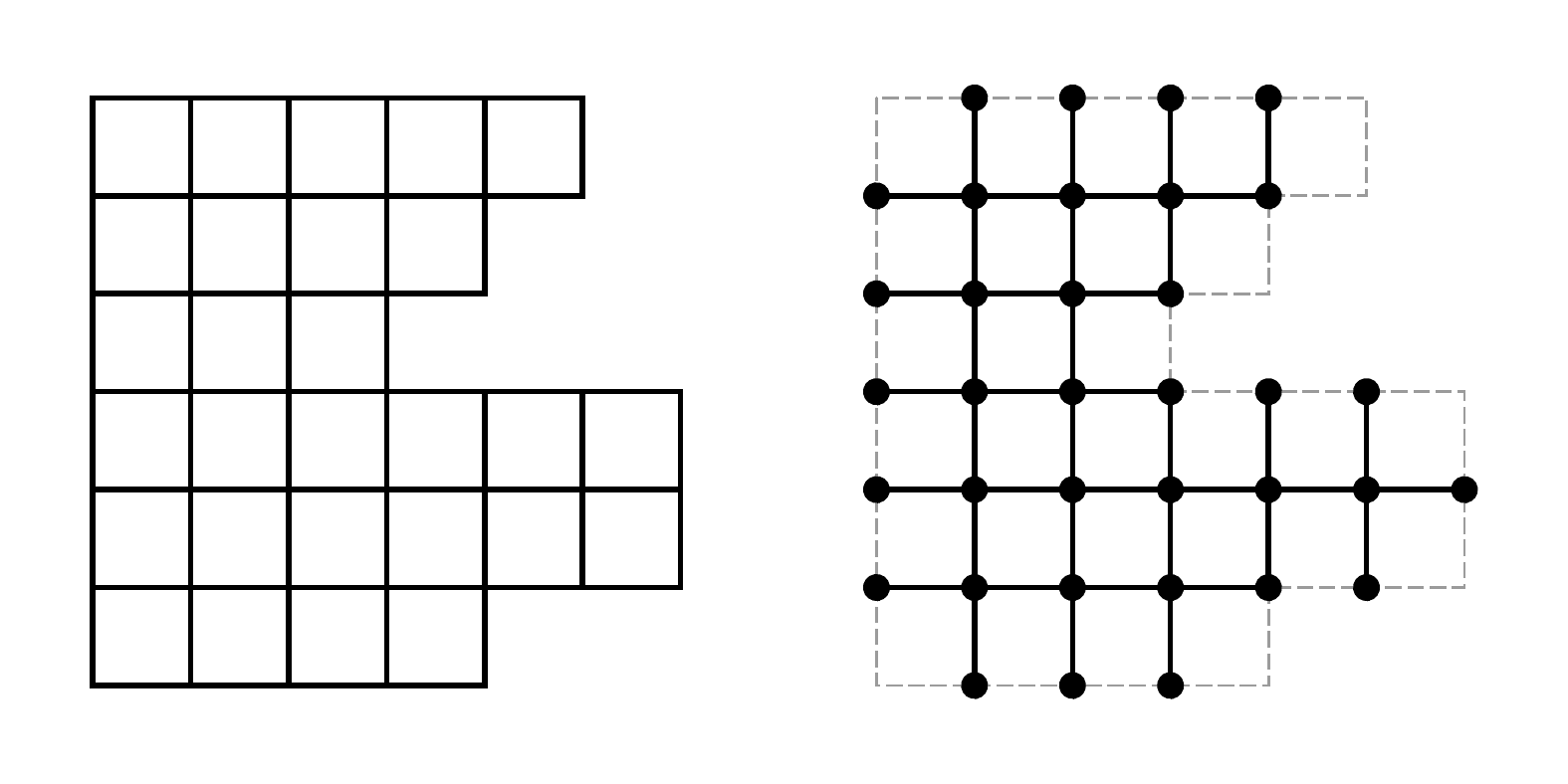}
\caption{\label{fig:dualgraph} Illustration of a partial assembly (left) and the corresponding dual graph (right).}
\end{figure}

The \emph{contour graph} of an assembly $A$, $C(A)$, is the subgraph of the dual graph of $A$ consisting of all edges whose corresponding pairs of half-edges are not connected in the planted assembly, see Figure \ref{fig:contour}. We will refer to the connected components of the contour graph as \emph{contours}. Note that the contour graph, together with the boundary of the assembly, partitions the pieces in $A$ into \emph{connected regions} given by the sets of pieces contained in each face. An important observation is that, within each of these regions, the positions and orientations of pieces differ from the planted assembly by a common translation and rotation, and an edge of the dual graph lies in the contour graph if and only if its half-edges come from different regions.

\begin{figure}
\centering
 \makebox[0pt]{\includegraphics[scale=1.15]{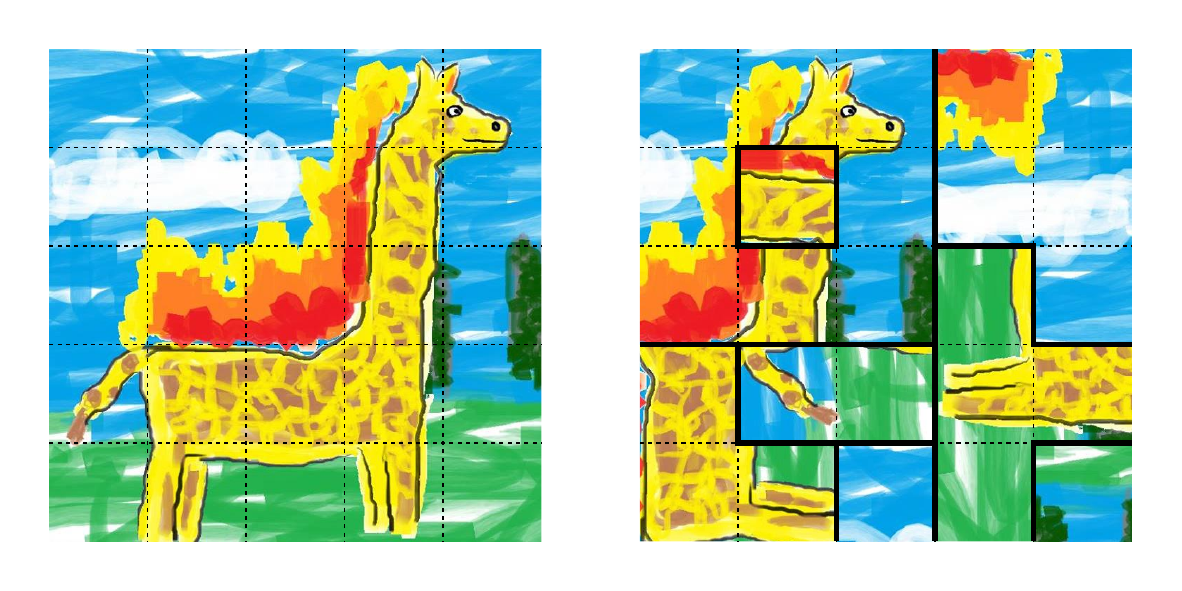}}
\caption{\label{fig:contour} An example of a contour graph. Though not part of the random model, the puzzle is here given a motif to better visualize the relative position of the jigsaw pieces. The left picture illustrates the planted assembly of a $5\times 5$ puzzle, and the right one an alternative assembly. Edges of the contour graph are marked by bold line segments. Here, the contour graph contains three contours, and partitions the puzzle into $8$ connected regions.}
\end{figure}

One important idea of the proof of part $(ii)$ of Theorem \ref{thm:main} is to limit the effect the planted assembly may have on making other solutions more likely than they would be with the simple independent model mentioned in the end of the introduction. Here the dependency effect is going to be limited by Lemma \ref{lemma:cfi} below, which relies on not too many edges in a proposed alternative assembly being of the same type as each other. This is clearly not possible for complete assemblies as are $2n(n-1)$ interior edges, but only $q=O(n)$ edge types. Hence, this will require us to restrict attention to small partial assemblies of the puzzle. What follows are two rather technical definitions whose point is to capture this notion.

Given an assembly $A$, and a carving $\omega\in\Omega$ by which $A$ is feasible, we say that an edge in the dual graph of $A$ has \emph{type $\{j, \iota(j)\}$} (with respect to $\omega$) if the half-edges across it have jig shapes $j$ and $\iota(j)$. Let $\mathcal{J}=\{ \{j, \iota(j)\} : j=1, 2,\dots, q\}$ be the set of between $\frac{q}{2}$ and $q$ possible types of edges in the dual graph. For any such pair $A, \omega$ and a set $\mathcal{E}$ of edges in the dual graph of $A$, the \emph{shape multiplicity of $\mathcal{E}$} with respect to $\omega$ is defined as
\begin{equation*}
sm(\mathcal{E}, \omega) = \sum_{J\in\mathcal{J} } \lfloor \#\{\text{edges in $\mathcal{E}$ of type $J$}\}/2\rfloor.
\end{equation*}
Similarly, the shape multiplicity of the assembly, $sm(A, \omega)$, is the shape multiplicity of its dual graph. Note that shape multiplicity can be interpreted as the maximal number of disjoint pairs of edges of the same type in $\mathcal{E}$. 

For any complete assembly $A$ and an $\omega\in\Omega$ by which $A$ is feasible, we say that $A$ is \emph{$k$-good} (with respect to $\omega$) if the shape multiplicity of any $k\times k$ window is at most $1$ when the window touches the boundary of the puzzle, and at most $2$ otherwise.

\begin{proposition}\label{prop:kgood}
For $k=o(n^{1/12})$ and $q=\Omega(n)$, the planted assembly is $k$-good with high probability.
\end{proposition}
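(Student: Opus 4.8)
The plan is to bound, via a first-moment/union-bound argument, the probability that the planted assembly fails to be $k$-good. The event of failure is that there exists a $k\times k$ window $W$ whose shape multiplicity (with respect to the planted carving $\omega$) exceeds the allowed bound: at least $2$ if $W$ touches the boundary, or at least $3$ if $W$ is interior. There are $O(n^2)$ choices of window, so it suffices to show that for a fixed window the relevant probability is $o(n^{-2})$. Recall that $sm(W,\omega)$ counts the maximal number of disjoint pairs of dual edges of equal type inside $W$; since a $k\times k$ window has $O(k^2)$ dual edges, the events ``$sm(W,\omega)\ge m$'' can be witnessed by $m$ disjoint pairs, i.e.\ by $2m$ dual edges of $W$ partitioned into $m$ monochromatic (same-type) pairs. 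So I would union-bound over the $O(k^2)^{2m} = k^{O(m)}$ ways to choose such a configuration of $2m$ edges and count the probability that each designated pair receives matching types.

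The key point is to estimate the probability that a designated collection of $m$ disjoint pairs of dual edges in $W$ are each monochromatic. Here one must be careful that the dual edges of a window are not independent under the planted measure: each dual edge of $W$ corresponds to a pair of half-edges that are adjacent in the planted assembly, hence carries a single planted jig type, and a window edge is an equality constraint between two such planted edges. So $sm(W,\omega)\ge m$ essentially says that, among the roughly $2k(k-1)$ planted edges underlying $W$, we can find $m$ disjoint pairs whose planted types coincide (up to the involution $\iota$). Since the planted types are i.i.d.\ uniform over the $\ge q/2$ classes in $\mathcal J$, the probability that $m$ fixed disjoint pairs all match is at most $(2/q)^m$. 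Combining with the union bound gives, for a fixed window,
\[
\mathbb{P}\big(sm(W,\omega)\ge m\big) \le \binom{O(k^2)}{2m}\, (2m-1)!!\, \left(\frac{2}{q}\right)^{m} \le \left(\frac{C k^4}{q}\right)^{m}
\]
for a constant $C$, using $q=\Omega(n)$ and $k=o(n^{1/12})$ so that $k^4/q = o(1)$. Taking $m=2$ for boundary-touching windows and $m=3$ for interior windows, this is $O(k^8/n^2)$ and $O(k^{12}/n^3)$ respectively; multiplying by the $O(n^2)$ choices of window still yields $o(1)$ precisely because $k=o(n^{1/12})$ forces $k^{12}=o(n)$. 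I should also handle the minor bookkeeping that a single window has only $O(k)$ boundary-adjacent edges, which only helps.

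The main obstacle, and the step requiring the most care, is the dependency structure just mentioned: a naive treatment that pretends the dual edges of $W$ are independent is not immediately justified, and worse, two distinct dual edges of $W$ can in principle be forced equal for structural reasons having nothing to do with randomness (e.g.\ if the window's assembly reuses the same planted adjacency). I would resolve this by working directly with the underlying planted edges: reduce ``$m$ disjoint monochromatic pairs of window edges'' to ``$m$ disjoint pairs of planted edges with coinciding type,'' check that disjointness of the window edges can be arranged to give (or be refined to) disjointness at the level of planted edges up to a bounded loss, and then invoke independence of the planted types. The exponent $1/12$ in the hypothesis is exactly what one gets from balancing $k^{12}$ against $n$ in the interior case, which is the binding constraint, so the argument should be tight in that sense.
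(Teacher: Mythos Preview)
Your approach is exactly the paper's: union-bound over windows and over choices of $m$ disjoint pairs of dual edges, using that each pair matches with probability $O(1/q)$. Two points need fixing.

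First, and most importantly, your arithmetic for the boundary case does not close. You multiply the per-window bound $O(k^8/q^2)=O(k^8/n^2)$ by $O(n^2)$ windows and get $O(k^8)$, which is \emph{not} $o(1)$ for $k=o(n^{1/12})$. The paper (and what you should do) uses that there are only $O(n)$ windows touching the boundary, so the boundary contribution is $O(n)\cdot O(k^8/q^2)=O(k^8/n)=o(1)$. The interior contribution $O(n^2)\cdot O(k^{12}/q^3)=O(k^{12}/n)=o(1)$ is fine and is indeed the binding constraint explaining the exponent $1/12$.

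Second, your last paragraph worries about a dependency issue that does not arise here. The proposition concerns the \emph{planted} assembly, so a $k\times k$ window is literally a contiguous $k\times k$ block of the original grid; its $2k(k-1)$ dual edges are $2k(k-1)$ distinct planted edges whose jig types are independent and uniform. There is no possibility of two window edges ``reusing the same planted adjacency'' -- that phenomenon only occurs for non-planted assemblies (and is what Lemma~\ref{lemma:cfi} handles later). So the probability that $m$ fixed disjoint pairs all match is at most $(2/q)^m$ with no further argument needed, and the paper dispatches the whole thing in two lines.
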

\begin{proof}
In order for a $k\times k$ window of the planted assembly to have shape multiplicity at least $i+1$, there must exist $i+1$ disjoint pairs of edges in the dual graph where each pair has a common type. By the union bound, the probability that this occurs in a carving of a $k\times k$ window is $O( k^{4(i+1)} q^{-(i+1)})$. Summing this over $O(n)$ windows with $i=1$ and $O(n^2)$ ones with $i=2$ yields $O(n \,k^{2\cdot 4} q^{-2} + n^2 k^{2\cdot 6} q^{-3})=o(1)$.
\end{proof}
\begin{proposition}\label{prop:onlykgood}
For $k=o(n^{1/12})$ and $q=\Omega(n)$, with high probability, either all feasible complete assemblies are similar or there are (at least two) non-similar feasible $k$-good complete assemblies.
\end{proposition}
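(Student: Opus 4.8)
\emph{Proof plan.} By Proposition~\ref{prop:kgood}, with high probability the planted assembly $A_0$ is feasible and $k$-good, and I work on this event. If every feasible complete assembly is similar to $A_0$ the first alternative holds, so assume that some feasible complete assembly is non-similar to $A_0$. Any such assembly has a nonempty contour graph, since a feasible complete assembly with empty contour graph is a global rotation of $A_0$ and hence similar to it. It therefore suffices to exhibit one feasible $k$-good complete assembly that is non-similar to $A_0$, as it then forms the required non-similar pair together with $A_0$.

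Among all feasible complete assemblies non-similar to $A_0$ choose one, $B$, with the fewest contour edges (so $|C(B)|\geq 1$). I claim that $B$ is $k$-good. Suppose not; then some $k\times k$ window $W$ of $B$ has shape multiplicity at least $2$ if $W$ touches the boundary of the puzzle, and at least $3$ otherwise. Within each connected region of $B$ the pieces occupy their planted positions and orientations up to one common rigid motion, so the dual edges of $W$ lying inside a single region carry exactly the jig types, in exactly the geometric pattern, of the corresponding edges of some $k\times k$ window of $A_0$; since $A_0$ is $k$-good, each region of $B$ thus contributes at most $2$ to $\mathrm{sm}(W)$, and at most $1$ when its corresponding $A_0$-window touches the boundary. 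Comparing this per-region bound with the excess in $\mathrm{sm}(W)$ produces two distinct dual edges $e_1,e_2$ of $W$ of a common type $\{j,\iota(j)\}$; a slightly more careful count, separating the contribution of the contour edges of $W$ from those of the individual regions, lets one take $e_1,e_2$ to straddle the contour, meaning that re-pairing their four incident half-edges the other way is consistent with the jig types.

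The core step is to turn this into an honest surgery. Cutting the connections $e_1$ and $e_2$ and regluing the four half-edges in the opposite pairing, I would check that the outcome can be realised as a complete assembly $B'$: a bijection of $\{1,\dots,n\}^2$ together with an orientation map, inducing a well-defined rigid motion on each of its regions, still feasible (each new connection again matches a jig of type $j$ with one of type $\iota(j)$, and no previously matching pair of half-edges is separated), still non-similar to $A_0$, but with strictly fewer contour edges than $B$. This contradicts the minimality of $B$, so $B$ is $k$-good, and the proof is complete.

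The step I expect to be the main obstacle is exactly this surgery. One must show that rerouting the contour through $\{e_1,e_2\}$ corresponds to repositioning the two sides of the cut by rigid motions with no overlap and without spoiling the feasibility of any other edge, and that $|C(\cdot)|$ genuinely drops rather than being merely redistributed or increased --- this is delicate, especially when $e_1$ and $e_2$ lie in two different regions, where the operation reconnects or splits regions and one has to argue the net change to the contour is favourable. Extracting a usable pair $e_1,e_2$ in the first place, the case distinction according to whether $W$ and the edges $e_1,e_2$ touch the boundary of the puzzle (which lowers the threshold from $2$ to $1$ and restricts the admissible rigid motions there), and the degenerate case in which $e_1$ and $e_2$ already share a piece all require care; the remaining arguments are the bookkeeping indicated above.
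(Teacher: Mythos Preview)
Your proposal takes a completely different route from the paper, and the central step---the surgery---is a genuine gap, not just missing detail.

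First, the per-region bound does not control $\mathrm{sm}(W)$ the way you suggest. Knowing that the edges internal to each region of $W$ form (under a rigid motion) a subset of some $k\times k$ window of $A_0$ tells you that the shape multiplicity \emph{within} each region is at most $2$. It says nothing about pairs of equal-type edges with one edge in region $R_i$ and the other in region $R_j$; such cross-region pairs need not involve any contour edge at all. So from $\mathrm{sm}(W)\geq 3$ you cannot deduce the existence of a pair $e_1,e_2$ that ``straddles the contour'' in any useful sense. Second, even granting such a pair, the surgery is not well-defined: cutting two dual edges and re-pairing their half-edges does not in general produce a bijection of $\{1,\dots,n\}^2$ with a consistent orientation map. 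A complete assembly is a global object, and two same-type edges sitting in different rows, columns, or orientations cannot be locally swapped into another complete assembly. You flag this yourself, but it is not a technicality to be filled in---it is the whole argument, and I see no reason to expect it to work.

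The paper's proof avoids all of this with a short exchangeability argument. Write $E$ for the event in the proposition. On $\neg E$, all feasible $k$-good assemblies are similar (contributing at most four carvings of the box, namely the four rotations of one of them), yet there is also a feasible non-$k$-good assembly, contributing at least one further carving. Given the box, the planted carving is uniform over these $\geq 5$ possibilities, so with probability at least $\tfrac{1}{5}$ the planted assembly is not $k$-good. Hence $(1-\mathbb{P}(E))\cdot\tfrac{1}{5}\leq \mathbb{P}(\text{planted assembly not $k$-good})=o(1)$ by Proposition~\ref{prop:kgood}, and $\mathbb{P}(E)=1-o(1)$. No combinatorics on assemblies is needed at all.
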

\begin{proof}
Let us denote the event described in the proposition by $E$. Condition condition on the complement of $E$, that is, the event that all feasible $k$-good complete assemblies are similar, but there is at least one more feasible complete assembly that is not $k$-good. Consider the box of the puzzle (that is, the unordered collection jigsaw pieces). There are at most four $\omega\in\Omega$ that gives rise to this box, namely those corresponding to the four rotations of the feasible $k$-good complete assemblies, and at least one additional $\omega\in\Omega$, corresponding to the non $k$-good assembly. As the carving of the planed assembly is chosen uniformly at random, each of these $\geq 5$ carvings are equally likely. Hence, with probability at least $\frac{1}{5}$, the planed assembly is not $k$-good. This means that $\left(1-\mathbb{P}(E) \right) \cdot \frac{1}{5} \leq \mathbb{P}(\text{planted assembly is not $k$-good})$, where the right-hand side is $o(1)$ by Proposition \ref{prop:kgood}. Hence, $\mathbb{P}(E)=1-o(1)$, as desired.
\end{proof}

We now turn using shape multiplicity for estimating the probability that a given partial assembly $A$ is feasible. If one were to disregard any dependency effects from the planted assembly, one would expect the probability that $A$ is feasible to be $q^{-E}$ where $E$ is the number of edges in the contour graph of $A$. However, this is not generally true as events of the form \emph{pairs of half-edges fit together} may not be independent. In fact, in the case of an edge-matching puzzle, that is, $\iota=id$, one can show that such events are always increasing in the sense that $\mathbb{P}(A_m \vert \cap_{i=1}^{m-1} A_i) \geq \mathbb{P}(A_m)$ where $A_1, A_2, \dots A_m$ each denote the event that some assembly is feasible, and this inequality can be made strict by choosing appropriate assemblies.

\begin{lemma}\label{lemma:cfi}
Fix a partial assembly $A$ and let $E$ denote the number of edges in its contour graph $C(A)$.  If no two half-edges across $C(A)$ are connected in the planted assembly, then
$$\mathbb{P}(A\text{ is feasible}) = q^{-E}.$$
Moreover, without this restriction we have that for any $i \geq 0$,
$$\mathbb{P}(A\text{ is feasible} \wedge sm(C(A), \omega)\leq i ) \leq q^{-E+i}.$$
\end{lemma}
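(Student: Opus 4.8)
The plan is to set up an exploration of the edges of the contour graph $C(A)$ one at a time, in a carefully chosen order, and track how the carving $\omega$ of the planted assembly constrains the jig types at each step. Recall that feasibility of $A$ means that each connected pair of half-edges in $A$ gets complementary jig types. The half-edges are partitioned by the planted assembly into pairs (planted edges) plus the half-edges on the boundary, and the jig types assigned to these objects are mutually independent and uniform on $\{1,\dots,q\}$. So the idea is to expose feasibility constraints edge by edge and, at each edge of $C(A)$, either gain a genuine factor $q^{-1}$ (when the constraint involves a fresh, not-yet-revealed planted edge or boundary half-edge) or, in the ``bad'' case, lose the factor entirely but charge this loss to a repeated edge type, so that the total number of losses is bounded by $sm(C(A),\omega)$.

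First I would fix an arbitrary linear order on the edges of $C(A)$ and process them in that order, maintaining the set $S$ of planted edges / boundary half-edges whose jig types have already been revealed. When we come to an edge $e=\{h,h'\}$ of $C(A)$ with half-edges $h,h'$: the half-edge $h$ sits in some planted pair (or is a boundary half-edge), and likewise $h'$. The hypothesis ``no two half-edges across $C(A)$ are connected in the planted assembly'' guarantees $h$ and $h'$ lie in \emph{different} planted objects; this is what makes the first, unconditional claim work: process the edges in any order, and at edge $e$ at least one of the two planted objects containing $h,h'$ has not yet had its type revealed, so conditionally on everything exposed so far the event ``$h,h'$ complementary'' has probability exactly $q^{-1}$ — and over all $E$ edges these conditional probabilities multiply to $q^{-E}$. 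For the general bound I would drop the hypothesis and, before processing, group the edges of $C(A)$ by type: within each type-class, set aside a maximal matching into pairs — there are exactly $\lfloor \#\{\text{edges of that type}\}/2\rfloor$ such pairs, summing to $sm(C(A),\omega)$ over all classes — and call an edge \emph{charged} if it is the second element of its pair, \emph{free} otherwise. Now order the edges so that within each pair the free edge comes first, and process them in an order where, when we reach a free edge, at least one of its two planted objects is still unrevealed. For a free edge we then get a conditional factor $\le q^{-1}$ exactly as before (now only an upper bound, since the two half-edges might share a planted object — but then the event is either probability $0$ or $q^{-1}$, hence $\le q^{-1}$). For a charged edge we simply bound its conditional probability by $1$. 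Multiplying the conditional probabilities along the exposure gives $\mathbb{P}(A\text{ feasible}) \le q^{-(\#\text{free edges})} = q^{-(E - sm(C(A),\omega))}$. Finally, since we are actually bounding $\mathbb{P}(A\text{ feasible} \wedge sm(C(A),\omega)\le i)$, on this event $sm(C(A),\omega)\le i$, so $q^{-(E-sm(C(A),\omega))} \le q^{-(E-i)}$, which is the claim.

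The subtle point — and the one I'd expect to be the main obstacle — is the ordering. The grouping into charged/free edges depends on $\omega$ (the types are not known in advance), so one cannot literally fix a deterministic processing order up front; one has to argue that for \emph{every} outcome $\omega$ a valid order exists, i.e. an order in which each free edge, when reached, still has an unrevealed planted object, while charged edges may be interleaved arbitrarily. One clean way to handle this: note that the free edges, viewed as edges of the dual graph, themselves have the property that no two of them... no — that is false in general. Instead I would argue more carefully using the structure of the dual graph: process \emph{all} edges, but when bounding the conditional probability at each step, observe that the only way to fail to get a factor $q^{-1}$ at an edge $e$ is that \emph{both} planted objects meeting $e$ were already revealed by earlier edges of $C(A)$; charge each such failure to one of those earlier edges, show this charging is injective after restricting to one edge per type-pair, and conclude that the number of failures is at most the number of type-repetitions, namely $sm(C(A),\omega)$. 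Making this charging argument precise — in particular verifying injectivity and that the charge lands within the same type-class — is the technical heart of the proof; everything else is the routine ``expose and multiply conditional probabilities'' bookkeeping.
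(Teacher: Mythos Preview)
Your treatment of the first claim (no two half-edges across $C(A)$ are planted-connected) is fine and matches the paper: under that hypothesis all $2E$ half-edges sit in distinct planted objects, hence their jig types are independent uniform and the $E$ feasibility constraints are independent events of probability $q^{-1}$ each.

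For the second claim there is a genuine gap. You correctly sense that the whole argument hinges on controlling the number of ``failure'' edges (those whose two planted objects are already revealed), and you explicitly flag that the charging step --- showing the number of failures is at most $sm(C(A),\omega)$ on the feasible event --- is ``the technical heart of the proof''. But you do not carry it out, and the mechanism you sketch (``charge each failure to one of those earlier edges, show this charging is injective after restricting to one edge per type-pair'') is not a proof: you give no rule for which earlier edge to charge to, no argument that the resulting pairs are disjoint, and no argument that failure and target share a type. Your first attempt (pre-grouping by type) fails for exactly the reason you note --- the order would depend on $\omega$, so the conditional-probability telescoping is not well-defined.

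What you are missing is the structural observation that the paper exploits directly. Form the auxiliary graph $G$ on the half-edges across $C(A)$ with two kinds of edges: a \emph{new} edge for each pair connected in $A$, and an \emph{old} edge for each pair connected in the planted assembly. Every vertex has exactly one new edge and at most one old edge, so $G$ is a disjoint union of alternating paths and cycles, and (since contour edges are by definition not planted edges) no cycle has fewer than two new edges. From this one gets an exact formula: the probability that $A$ is feasible equals $q^{-E+c}$, where $c$ is the number of cycles in $G$ (each path contributes $q^{-E'}$ and each cycle $q^{-E'+1}$). Moreover, on the feasible event every cycle has all its new edges of a common type, so the $c$ cycles exhibit $c$ disjoint same-type pairs and hence $sm(C(A),\omega)\ge c$. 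Since $c$ is deterministic, either $c>i$ (forcing the event $\{\text{feasible}\wedge sm\le i\}$ to be empty) or $c\le i$ (giving the bound $q^{-E+c}\le q^{-E+i}$).

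Your exploration approach can be completed, but only by invoking exactly this structure: once you know $G$ has maximum degree two, you can show failures form an independent set along each path/cycle, and then the ``charge to the left neighbour'' rule is injective and produces disjoint same-type pairs. Without that observation the charging is unsupported. The paper's route --- compute $\mathbb{P}(\text{feasible})$ exactly via the path/cycle decomposition and then compare $c$ to $sm$ --- is both shorter and avoids the ordering subtleties entirely.
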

\begin{proof}
We construct the abstract graph $G$ whose vertices are the half-edges across in $C(A)$. We connect a pair of vertices by an \emph{old edge} if they are connected in the planted assembly, and by a \emph{new edge} if they are connected in the partial assembly in the statement of the lemma. We will refer to such pairs of vertices in $G$ as \emph{old pairs} and \emph{new pairs} respectively. Note that each vertex in $G$ is the end-point of exactly one new edge and at most one old edge, so $G$ consists of paths and cycles. Further, these components alternate between old and new edges, and, in the case of a path, begin and end with new edges.

The way the carving of the planted assembly is chosen means that each vertex in $G$ that is not part of an old pair is independently and uniformly assigned a jig type, and each old pair is independently and uniformly assigned a pair of complementary jig types. Further, the new assembly is feasible if the jigs of half-edges in each component alternate between two complementary types. We say that the component is \emph{feasible} if this holds.

In the case where no two half-edges across the contour graph are connected in the planted assembly, there are no old edges in $G$. Hence each of the $E$ new pairs fits together independently with probability $\frac{1}{q}$.

Considering the latter case, a path in $G$ with $E'$ new edges, $$h_1 \rightarrow h_2 \rightarrow \dots \rightarrow h_{2E'},$$ is feasible if  $h_2, h_4, \dots, h_{2E'}$ all have the complementary jig type to $h_1$, which occurs with probability $q^{-E'}$. Similarly, a cycle in $G$ with $E'$ new edges, $$h_1 \rightarrow h_2\rightarrow \dots\rightarrow h_{2E'}\rightarrow h_1,$$ is feasible if $h_2, h_4, \dots, h_{2E'}$ all have the same jig type, which occurs with probability $q^{-E'+1}$. As the total number of new edges is $E$, we see that the probability that the new assembly is feasible is $q^{-E}$ when $G$ only consists of paths, and increases by a factor of $q$ for each cycle.

Note that, by the definition of contour graph, a pair of vertices in $G$ cannot be connected by both an old and a new edge. Hence, each cycle contains at least two new edges. As a consequence, for each feasible cycle in $G$ there are two edges of the same type in $C(A)$. Hence, if the partial assembly is feasible, $sm(C(A), \omega)$ is at least the number of cycles in $G$. In particular, there are either more than $i$ cycles, in which case the probability of the event in the statement is $0$, or there are at most $i$ cycles, in which case the probability is at most $q^{-E+i}$, as desired.
\end{proof}

We further need to bound the number of possible shapes of contours in assemblies. To this end, we have the following estimate.

\begin{lemma}\label{lemma:euler}
Up to translation, the number of connected subgraphs $G\subset \mathbb{Z}^2$ with $E$ edges and $F$ bounded faces is at most ${3E-4F+4 \choose 2E-4F+4}$. Moreover, for any $\varepsilon'>0$, there exists a $M=M_{\varepsilon'}>0$ such that
$${3E-4F+4 \choose 2E-4F+4} = O_{\varepsilon'}\left( M^{E-2F} (1+\varepsilon')^{E}\right),$$
where the subscript $\varepsilon'$ denotes that $M$ as well as the implicit constant in the big O-notation are allowed to depend on $\varepsilon'$, but nothing else.

\end{lemma}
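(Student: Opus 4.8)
The plan is to count connected subgraphs $G \subset \mathbb{Z}^2$ by encoding them via a spanning structure and then apply Euler's formula to control the number of edges. First I would fix a root. Since we count up to translation, assume $G$ contains the origin as a vertex. Let $V$ be the number of vertices and $E$ the number of edges of $G$, and let $F$ be the number of bounded faces. Since $G$ is connected and embedded in the plane, Euler's formula gives $V - E + (F+1) = 2$, i.e. $V = E - F + 1$. The idea is to describe $G$ by choosing a spanning tree $T$ together with the set of $E - (V-1) = E - (E - F) = F$ extra edges; this already suggests the exponents $E$ and $F$ appearing in the bound. Concretely, I would bound the number of spanning trees of a subgraph of $\mathbb{Z}^2$ rooted at the origin. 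A rooted spanning tree on $V$ vertices in $\mathbb{Z}^2$ can be encoded by a walk (a depth-first traversal) that traverses each of its $V-1$ edges twice, for a total of $2(V-1)$ steps, each step being one of $4$ directions; this gives at most $4^{2(V-1)} = 16^{V-1}$ rooted spanning trees, hence at most that many connected subgraphs with a chosen spanning tree. Then the $F$ non-tree edges must be chosen among the edges of $\mathbb{Z}^2$ incident to the $V$ vertices of $T$; there are at most $2V$ such edges (each vertex has degree $4$, counted with the factor $1/2$), so at most $\binom{2V}{F}$ choices. Altogether the count is at most $16^{V-1}\binom{2V}{F}$ with $V = E - F + 1$, which one can massage into the stated binomial form $\binom{3E-4F+4}{2E-4F+4}$; I would verify the exact combinatorial identity/inequality by a direct manipulation, possibly adjusting the DFS encoding to get the precise constants (the ``$+4$'' slack is there to absorb such bookkeeping and boundary effects).

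For the clean binomial bound $\binom{3E-4F+4}{2E-4F+4}$ itself, I would argue more carefully rather than through the crude $16^{V-1}$ estimate, since the exponent $3E-4F$ is smaller than $4V \sim 4E$. Here I would use the standard fact that a connected planar subgraph of $\mathbb{Z}^2$ with $E$ edges and $F$ bounded faces can be described by its boundary walk data: set $a = E - 2F$, which (up to the additive constant) is essentially the number of edges minus twice the face count, a quantity that behaves like the ``tree part.'' The bound $\binom{3E - 4F + 4}{2E - 4F + 4} = \binom{a + (E-2F) + 4}{2a + 4}$ with $a = E - 2F$ suggests writing everything in terms of $a$ and $E$: $3E - 4F + 4 = 2a + E + 4$ and $2E - 4F + 4 = 2a + 4$, so the binomial is $\binom{2a + E + 4}{2a+4} = \binom{2a+E+4}{E}$. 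This last form is what I would target directly: I want to say that $G$ is determined, up to translation, by a choice of $E$ among $2(E - 2F) + E + 4$ ``slots'' — a polygon-type encoding where each edge records a turn relative to a canonical traversal and the faces reduce the free parameters. This is the step I expect to be the main obstacle: finding the right combinatorial encoding that yields exactly the exponent $E - 2F$ (not $E$) for the geometric freedom, i.e. correctly exploiting that each bounded face ``ties down'' two edges' worth of freedom. A robust fallback, if a slick encoding is elusive, is to prove the weaker bound $\binom{CE + 4}{\ldots}$ and note that the lemma is only used through its second, asymptotic conclusion.

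For the second part, the asymptotic estimate, the plan is purely computational and should be routine. Write $m = E - 2F \geq 0$ (ignoring the additive constants, which contribute only an $O(1)$ factor after adjusting $M$ and the implied constant). Then $\binom{3E-4F+4}{2E-4F+4} = \binom{2m + E + 4}{2m + 4} \le \binom{2m+E+4}{2m+4}$, and I would split into two regimes. When $m \ge \delta E$ for a small constant $\delta = \delta(\varepsilon')$, the binomial is at most $2^{2m+E+4} \le (2^{1/\delta + 1})^{m} \cdot 2^{4}$, so choosing $M \ge 2^{1/\delta+1}$ absorbs it into the $M^{E-2F} = M^m$ term. When $m \le \delta E$, use $\binom{2m+E+4}{2m+4} \le \binom{(2\delta+1)E + 4}{(2\delta + 4)}$-type bound — more precisely $\binom{N}{k} \le (eN/k)^k$ with $N = 2m + E + 4$ and $k = 2m+4$, giving $\le M^{m} \cdot (eN/k)^{O(1) + \text{small}}$; the point is that for $m$ small compared to $E$, $(eN/k)^{k}$ with $k \approx 2m$ grows like $(\text{const} \cdot E/m)^{2m}$, and one checks $(\text{const}\cdot E/m)^{2m} = e^{2m \log(\text{const}\, E/m)} \le (1+\varepsilon')^{E}$ once $\delta$ is small enough, since $x \log(1/x) \to 0$ as $x \to 0$. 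Choosing $\delta$ small enough (depending on $\varepsilon'$) and then $M$ large enough (depending on $\delta$, hence on $\varepsilon'$) makes both regimes fit the claimed bound $O_{\varepsilon'}(M^{E-2F}(1+\varepsilon')^E)$, completing the proof.
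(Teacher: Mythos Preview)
Your counting argument has a genuine gap. Neither of your two approaches actually yields the bound $\binom{3E-4F+4}{2E-4F+4}$. The spanning-tree route gives $16^{V-1}\binom{2V}{F}$ with $V-1=E-F$, so the leading factor is $16^{E-F}$; this is exponentially worse in $E$ than the target (which equals $\binom{E+R}{E}$ with $R=2E-4F+4$, hence is small precisely when $E-2F$ is small, not when $E-F$ is small). Your second approach---a ``polygon-type encoding'' where each bounded face ties down two edges of freedom---is the right intuition, but you do not supply the encoding and flag it yourself as the main obstacle. The fallback of proving a weaker bound is not harmless either: the applications in the paper sum $(M/2n)^{E-2F}$ over contours, so the exponent $E-2F$ (rather than $E-F$ or $E$) is exactly what is needed.

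The missing idea is short. By Euler's formula $V=E-F+1$, so
\[
R:=\sum_{v\in G}\bigl(4-\deg(v)\bigr)=4V-2E=2E-4F+4.
\]
Now explore $G$ outward from the origin in BFS layers: at each step, for every edge of $\mathbb{Z}^2$ from the newly reached vertices to vertices not yet reached, record a bit for whether that edge lies in $G$. Over the whole exploration one records exactly $E$ ``yes'' bits (each edge of $G$ is offered once, from its earlier endpoint) and $R$ ``no'' bits (each vertex contributes its degree deficit $4-\deg(v)$). The resulting binary string of length $E+R=3E-4F+4$ determines $G$, giving at most $\binom{E+R}{R}=\binom{3E-4F+4}{2E-4F+4}$ graphs.

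Your two-regime argument for the asymptotic statement is correct in outline and would work after cleaning up the ``$+4$'' terms, but the paper does it in one line: from the identity $\sum_{i,j\ge 0}\binom{i+j}{j}x^{i}y^{j}=(1-x-y)^{-1}$ one gets $\binom{i+j}{j}\le x^{-i}y^{-j}/(1-x-y)$ whenever $x,y>0$ and $x+y<1$; then take $i=E$, $j=2E-4F+4$, $x=(1+\varepsilon')^{-1}$, and $y=M^{-1/2}$ with $M$ large enough that $x+y<1$.
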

\begin{proof}
For any such $G$, let $V$ denote its number of vertices. Then, by the Euler characteristic formula, we have
$$V-E+F = 1.$$
Note in particular that this means that $V$ is independent of the choice of $G$. It further follows that
$$R := \sum_{v\in G} \left(4 - deg(v)\right) = 4V-2E = 2E - 4F + 4.$$

Now, consider the following procedure for constructing $G$. Since we are only interested in the number of graphs up to translation, we may, without loss of generality, assume that the origin is a vertex in $G$. Let $G_0$ be the empty graph, and let $G_1$ be the graph only containing the origin. For each $i\geq 1$, we construct $G_{i+1}$ by selecting a subset of the edges between $V(G_{i})\setminus V(G_{i-1})$ and $\Z^2\setminus V(G_{i})$, and adding them to $G_i$. If $G_{i+1}=G_i$, then we terminate and put $G=G_i$.

In order for the procedure to construct a graph with $E$ edges dividing the plane into $F$ regions, it must choose to include an edge $E$ times, and choose not to do so $R$ times. Hence we can encode any such graph as a binary string of $E$ ones and $R$ zeroes, which can be done in ${E+R \choose R}$ ways.

Finally, considering the double sum
$$\sum_{i=0}^\infty \sum_{j=0}^\infty {i+j \choose j} x^{i}y^{j} = \frac{1}{1-x-y},$$
which converges absolutely for $\abs{x}+\abs{y}<1$, it follows that for any $i, j\in \mathbb{N}_0$ and any $x, y\geq 0$ such that $x+y<1$ we have
$$ {i+j \choose j} \leq \frac{ x^{-i} y^{-j} }{1-x-y} = O_{x,y}( x^{-i}y^{-j}).$$
Letting $i=E$, $j=2E-4F+4$, $x=(1+\varepsilon')^{-1}$ and choosing $M=M_{\varepsilon'}$ sufficiently large so that $y=M^{-1/2}<1-x$, we get
$$ {3E-4F+4 \choose 2E-4F+4} = O_{\varepsilon'} \left( (1+\varepsilon')^{E} M^{E-2F+2}\right)=O_{\varepsilon'} \left( (1+\varepsilon')^{E} M^{E-2F}\right).$$
\end{proof}

\section{Proof of Theorem 1.1, part $(ii)$}\label{sec:allsimilar2}

Let us once and for all fix the value of $\varepsilon>0$ such that $q\geq (2+\varepsilon)n$, and choose a function $k=k(n)$ such that $k$ is always even, $k=\omega(\ln n)$ and $k=o(n^{1/12})$, say $k=2\lceil(\ln n)^2\rceil$. The strategy for the rest of the proof will be as follows. By Propositions \ref{prop:kgood} and \ref{prop:onlykgood} it suffices to show that, with high probability, the puzzle has no $k$-good feasible complete assemblies except (possibly) ones similar to the planted assembly. Let $\mathcal{A}$ denote the set of all complete assemblies whose contour graphs are non-empty and do not contain any contours consisting of exactly four edges that surrounds a single jigsaw piece. We know that if, for some $\omega\in\Omega$, there exists a $k$-good feasible complete assembly $A$ which is not similar to the planted assembly, then, by possibly moving around identical pieces, we may assume that $A\in\mathcal{A}$. Hence it remains to show that, with high probability, no $A\in\mathcal{A}$ is both feasible and $k$-good. This is divided into three cases, as described below, depending on $C(A)$. We have
\begin{equation*}
\mathbb{P}(\exists k\text{-good feasible }A\in\mathcal{A}) \leq \sum_{i=1}^3 \mathbb{P}(\exists k\text{-good feasible }A\in\mathcal{A}\text{ satisfying Case }i).
\end{equation*}
For each case $i$, we show that, for any $\omega\in\Omega$, the existence of a $k$-good feasible $A\in\mathcal{A}$ (with respect to $\omega$) implies the existence of a certain kind of partial assembly $A'\in\mathcal{A}'_i$ which is feasible and has $sm(A', \omega)$ at most $2$ for $i=1$ or $2$ and at most $1$ for $i=3$. Using Markov's inequality we show that, with high probability, no $A'\in\mathcal{A}_i'$ has these properties. This completes the proof of part $(ii)$ of Theorem \ref{thm:main}.
\\~\\
\textbf{Case 1:} There exists a contour not intersecting the boundary of the assembly, where the vertical and horizontal distances between pairs of vertices are each at most $k-2$.\\

Suppose that for a certain $\omega\in\Omega$ the contour graph of a $k$-good feasible complete assembly $A\in\mathcal{A}$ whose contour graph contains such a contour $C$. Note that $C$ can be contained in a $k\times k$ window, hence $sm(C, \omega)\leq 2$. By taking the subset of jigsaw pieces that have at least one corner on $C$, we obtain a feasible partial assembly with contour graph $C$ that satisfies $sm(C, \omega)\leq 2$. This is illustrated in Figure \ref{fig:case1}.

\begin{figure}
\centering
\includegraphics[scale=.5]{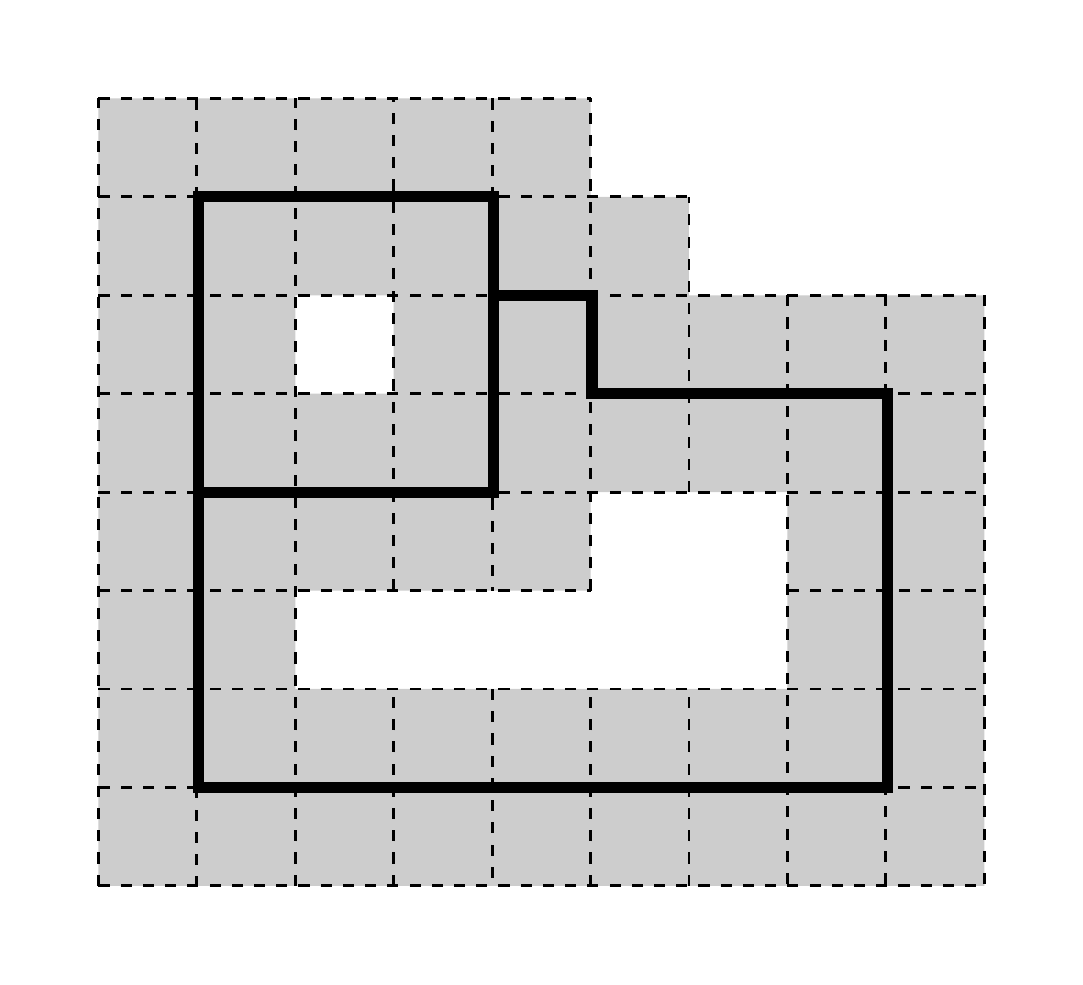}
\caption{ \label{fig:case1} Example of a minimal partial assembly around a contour. Here, the contour has two bounded faces, partitioning the assembly into three connected regions.}
\end{figure}

Let us start by fixing a connected graph $C\subset \mathbb{Z}^2$, and bounding, among the corresponding partial assemblies, the expected number of feasible ones. We denote the number of edges of $C$ by $E$ and the number of bounded faces by $F$. Without any restrictions on whether half-edges across the contour should be connected in the planted assembly, there are at most $(4n^2)^{F+1}$ ways to choose the original positions and orientations of the $F+1$ connected regions in the planted assembly. In counting the number of these where some pair of half-edges across $C$ are connected in the planted assembly, we can first choose a pair of such half-edges. This relates the position and orientation of two connected regions, hence the positions and orientations of connected regions can be chosen in at most $(4n^2)^{F}$ ways. By Lemma \ref{lemma:cfi}, it follows that, for this fixed $C$, the expected number of corresponding feasible assemblies is at most
$$(4n^2)^{F+1} q^{-E} + {2E \choose 2} (4n^2)^{F} q^{-E+2} \leq O(E^2) (2n)^{2F-E+2}\left(1+\frac{\varepsilon}{2}\right)^{-E}.$$
Note that $C$ is contained in a $k\times k$ window, hence $E=O(k^2)$ and we can replace $O(E^2)$ by $O(k^4)$ in the right hand side.

Summing this over all possible $C$, we apply Lemma \ref{lemma:euler} with $\varepsilon'$ chosen such that $(1+\varepsilon')(1+\frac{\varepsilon}{2})^{-1} \leq (1+\frac{\varepsilon}{3})^{-1}$. This gives an upper bound of 
$$O_\varepsilon(n^2 k^4) \sum_E \left(1+\frac{\varepsilon}{3}\right)^{-E} \sum_F \left(\frac{M}{2n}\right)^{E-2F},$$
where $E$ and $F$ goes over all possible combinations of numbers of edges and faces. Making the substitution $T=E-2F$ and letting $T_0$ denote the minimal possible value of $E-2F$, we can bound the sum by
$$O_\varepsilon(n^2 k^4) \sum_{E=0}^\infty \left(1+\frac{\varepsilon}{3}\right)^{-E} \sum_{T=T_0}^\infty \left(\frac{M}{2n}\right)^{T} = O_\varepsilon(n^{2-T_0} k^4).$$

To estimate $T_0$, consider the sum of perimeters of all bounded faces of such a contour $C$. On the one hand, this will count every interior edge in $C$ twice, and every edge on the boundary once, hence the sum equals $2E-P$ where $P$ is the perimeter of $C$. On the other hand, each face has perimeter at least four, hence the sum is at least $4F$. We conclude that $E-2F \geq \frac{P}{2}$, and as we assumed that $C$ is not just four edges surrounding a single jigsaw piece, we must have $P>4$ and thus $E-2F >2$ Hence $T_0\geq 3$, which implies that the expected number of such assemblies tends to $0$ as $n\rightarrow\infty$, as desired.
\\~\\
\textbf{Case 2:} No contour satisfies Case 1, but there is a contour with at least one vertex at (geometrical) distance $\geq \frac{k}{2}$ from the boundary.\\

Again, let $\omega\in\Omega$ and suppose there is a $k$-good feasible complete assembly $A\in\mathcal{A}$ with a contour of this form. Consider a $k\times k$ window centered at some vertex meeting the conditions of Case 2 on this contour. Let $C$ be the contour graph of the partial assembly consisting of this window, and let $S$ be the surrounding square in the dual graph. Note that since the assembly is $k$-good, $sm(C, \omega)\leq 2$, and as no contour in the complete assembly satisfies Case 1, all contours of $C$ must reach the boundary of the $k\times k$ window. Hence if such a contour exists, then there is a connected graph $C\cup S$ whose boundary $S$ is a square of side length $k$ and such that the midpoint of the square is a vertex in $C$, together with a feasible partial assembly in this square with contour graph $C$ such that $sm(C, \omega)\leq 2$.

Fix such a pair $C, S$, and let $E$ denote the number of edges of $C$, and $F$ the number of bounded faces of $C\cup S$. By the same argument as in Case 1, the expected number of corresponding partial assemblies that are feasible is at most
$$(4n^2)^{F}q^{-E} + {2E \choose 2}(4n^2)^{F-1} q^{-E+2} = O(k^4) (2n)^{2F-E} \left(1+\frac{\varepsilon}{2}\right)^{-E}.$$
Summing this over all possible $C$ using Lemma \ref{lemma:euler} we get an upper bound of
\begin{equation}\label{eq:case2} O_\varepsilon(k^4 M^{4k}) \sum_{E} \left(1+\frac{\varepsilon}{3}\right)^{-E} \sum_F \left(\frac{M}{2n}\right)^{E-2F},\end{equation}
where, again, $E$ and $F$ runs over all possible combinations of numbers of edges and faces.

\begin{lemma}\label{lemma:nobigbox}
Let $C, S$, $E$ and $F$ be as above. Then $E-2F \geq \frac{k}{4}-2-O\left(\frac{E}{k}\right)$.
\end{lemma}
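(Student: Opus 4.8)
The plan is to use the same ``sum of perimeters'' bookkeeping as in Case 1, but now exploiting the fact that the contour graph $C$ touches the boundary square $S$ while also having a vertex at the centre of a $k\times k$ window. Recall that in Case 1 the key identity was that, for a connected planar graph, $E-2F$ equals half the total boundary length (the perimeter counted with the convention that interior edges count twice and boundary edges once). I would first set up the analogous identity for the graph $C\cup S$: writing $E_S = 4k$ for the number of edges of $S$ and $E$ for the number of edges of $C$, the graph $C \cup S$ has $E + 4k - (\text{overlap})$ edges and $F$ bounded faces, so by Euler's formula $E - 2F$ is controlled by the total perimeter of the bounded faces of $C\cup S$ minus a multiple of the perimeter of the outer face, which here includes essentially all of $S$.

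The geometric input is that at least one contour of $C$ joins the centre of the window to the boundary $S$: since every contour of $C$ reaches $S$ (no contour satisfies Case 1) and some vertex of $C$ is the midpoint of the $k\times k$ square, there is a path in $C$ from a point at $\ell^\infty$-distance $k/2$ from $S$ to a point on $S$. Such a path has length at least $k/2$ in graph distance. I would argue that this path, together with the portion of $S$ it cuts off, bounds a region whose perimeter is large: either the path stays ``near'' one side of $S$, in which case it together with an arc of $S$ encloses a face (or union of faces) of perimeter $\gtrsim k$, or the contour winds around and one can still extract a bounded face of $C\cup S$ with large perimeter. In either case the total perimeter of bounded faces of $C\cup S$ exceeds that of the outer face by order $k$, and feeding this into $E - 2F = \tfrac12(\text{sum of face perimeters}) - (\text{correction from }S)$ gives $E - 2F \geq \tfrac{k}{4} - 2 - O(E/k)$; the $O(E/k)$ term absorbs the loss coming from edges of $C$ that lie on or very close to $S$ (of which there are $O(E)$, contributing $O(E)$ to perimeters but only $O(E/k) \cdot k$ net, or more carefully estimated by how many ``extra'' edges of $C$ run alongside $S$), and the $-2$ absorbs small constant corrections from the corners of $S$ and from degenerate faces.

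The main obstacle I expect is making the ``the path encloses a bounded face of perimeter $\gtrsim k$'' claim precise and uniform over all possible shapes of $C$. A path from the centre to the boundary does not by itself enclose anything — one needs to use that $C\cup S$ is connected and that $S$ is a closed curve, so that the path together with $S$ separates the square into at least two parts, at least one of which is bounded and must have a vertex near the centre on its boundary, hence perimeter at least of order $k$. There is also a subtlety that the path might hug $S$ for a long stretch before diving inward, so that the ``enclosed'' part is thin; here the point is that a thin enclosed strip of width $O(1)$ and length $\ell$ still has perimeter $\gtrsim 2\ell$, and the inward excursion of length $\gtrsim k/2$ forces $\ell \gtrsim k/2$ regardless. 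Handling all these configurations by a single clean inequality — probably by bounding $E - 2F$ directly via $\sum_{\text{bounded }f} (\mathrm{perim}(f) - 4) \geq 0$ and a lower bound on one specific well-chosen bounded face — is the delicate part; once that is done, plugging into \eqref{eq:case2} and summing the geometric series in $T = E - 2F$ as in Case 1 kills the sum because the leading factor $M^{4k}$ is beaten by $(M/2n)^{T}$ with $T \geq k/4 - o(k)$ as soon as $k = \omega(\ln n)$, which is exactly our choice of $k$.
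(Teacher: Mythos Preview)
Your starting identity $\sum_i P_i = 2E+4k$ is exactly what the paper uses, and you correctly identify the key geometric input (a path in $C$ from the centre to $S$). However, the inequality you propose --- ``$\sum_{f}(\mathrm{perim}(f)-4)\geq 0$ together with one face of perimeter $\gtrsim k$'' --- is too weak. Rewriting, $E-2F = \tfrac12\sum_i(P_i-4)-2k$, so to reach $E-2F\geq \tfrac{k}{4}-O(1)$ you would need $\sum_i(P_i-4)\geq \tfrac{9}{2}k-O(1)$; if all faces but one satisfy only $P_i\geq 4$, the distinguished face must have perimeter at least $\tfrac{9}{2}k$, not merely $\gtrsim k$. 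A path of length $k/2$ from the centre does not by itself force any single face to be that large, and your case split (``stays near one side'' vs.\ ``winds around'') does not close this gap.

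The paper's argument supplies two ingredients you are missing. First, it uses the isoperimetric bound $P_i\geq 4\sqrt{A_i}$ for \emph{every} face together with the area constraint $\sum_i A_i=k^2$; minimising $\sum_i 4\sqrt{A_i}$ over $A_i\geq 1$ is a concave problem whose extreme point gives $\sum_i P_i \geq 4k+4(F-1)-O(F/k)$, which is where the $O(E/k)$ error genuinely comes from (not from edges of $C$ near $S$). Second, and crucially, if some face has area exceeding $\tfrac{15}{16}k^2$ then its boundary must pass near the centre; a column-by-column count (either four extra horizontal boundary edges in a column, or at most $\tfrac{3k}{4}$ area there) upgrades the bound for that face to $P_i \geq 4\sqrt{A_i}+\tfrac{8}{k}(A_i-\tfrac{15}{16}k^2)$, and redoing the concave minimisation now yields $\sum_i P_i \geq \tfrac{9}{2}k+4(F-1)-O(F/k)$. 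This extra $\tfrac{k}{2}$ is precisely what turns $E-2F\geq -2-O(E/k)$ into $E-2F\geq \tfrac{k}{4}-2-O(E/k)$; without the isoperimetric input and the quantified centre-vertex bound, your sketch stalls at a negative lower bound.
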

\begin{proof}
Enumerate the bounded faces of $C\cup S$ from $1$ to $F$ and let $P_i$ denote the perimeter of the $i$:th face. Then, as every edge in $C$ is counted in the perimeter of two faces, and every edge in $S$ in one face, we get
\begin{equation}\label{eq:case2sumperimeter}
\sum_{i=1}^F P_i = 2E+4k.
\end{equation}
Note that $2E+4k$ here corresponds to $2E-4k$ in Case 1 because there $E$ does not include edges on the boundary of the $k\times k$ square. 

The perimeter of each face can be bounded in terms of its area. Suppose face $i$ has area $A_i$, and that it contains jigsaw pieces from $w_i$ columns and $h_i$ rows. Then, on the one hand $A\leq w_i\cdot h_i$, and on the other hand the face contains two horizontal and vertical edges for each of these columns and rows. Hence $$P_i\geq 2w_i+2h_i \geq 2w_i + 2 A_i/w_i \geq 4\sqrt{A_i}.$$

For $A_i > \frac{15}{16}k^2$ we can improve this lower bound using the fact that the midpoint of the square is a vertex in $C$. Let us consider $C\subseteq \mathbb{Z}^2$ such that the midpoint is $(0, 0)$, that is, the square is the area $[-\frac{k}{2}, \frac{k}{2}]^2$. Let $\Gamma$ be a minimal length path from $(0, 0)$ to the boundary of $[-\lceil \frac{k}{4}\rceil, \lceil \frac{k}{4}\rceil]^2$. We may, without loss of generality, assume that $\Gamma$ hits the boundary on the line $x=\lceil \frac{k}{4}\rceil$. Note that this means that the $y$-coordinate of any vertex along $\Gamma$ lies between $-\frac{k}{4}$ and $\frac{k}{4}$. Then, in each column $[l, l+1]\times \R$ for $l=0, 1, \dots, \lceil \frac{k}{4}\rceil-1$ there are either at least $4$ horizontal edges on the boundary of the face, or at most $\frac{3k}{4}$ area units inside the face. Letting $x$ be the number of columns satisfying the former, and $y$ the number that satisfies the latter, we get
$$P_i \geq 4\sqrt{A_i} + 2x,$$
$$A_i \leq k^2-\frac{k}{4} y,$$
$$x+y \geq \frac{k}{4}.$$
Hence,
$$P_i \geq 4\sqrt{A_i}+2x \geq 4\sqrt{A_i}+\frac{k}{2}-2y \geq 4\sqrt{A_i}+\frac{8}{k}\left(A_i-\frac{15}{16}k^2\right).$$

Let $f(a) = 4\sqrt{a}+\frac{8}{k} \max(0, a-\frac{15}{16}k^2)$. This function is continuous, and concave on the intervals $[0, \frac{15}{16}k^2]$ and $[\frac{15}{16}k^2, \infty]$. Then, by \eqref{eq:case2sumperimeter} and for a given $F$, the value of $2E+4k$ is bounded from below by the minimal value of $\sum_{i=1}^F f(a_i)$ subject to $\sum_{i=1}^F a_i = k^2$ and $a_i\geq 1$ for $i=1, 2, \dots, F$.

If $F-1 \geq \frac{1}{16}k^2$, we have $a_i\leq \frac{15}{16}k^2$ and hence the minimization problem is concave. In this case the function attains its minimum at an extreme point on the boundary of the domain. Up to permutation of variables, there is only one such point, namely $a_1=k^2-(F-1)$ and $a_2=a_3=\dots=a_F=1$, yielding a minimum of $4\sqrt{k^2-(F-1)} + 4(F-1) = 4k + 4(F-1) - O\left(\frac{F}{k}\right)$.

On the other hand, if $F-1 < \frac{1}{16}k^2$, we can divide the problem into two concave minimization problems by considering the case where all $a_i$:s are less than $\frac{15}{16}k^2$ and the case where at least one variable, say $a_1$, is at least $\frac{15}{16}k^2$. Again, up to permutation of variables, the only extremal points are
$$a_1=\frac{15}{16}k^2, a_2 = \frac{1}{16}k^2-(F-2), a_3=a_4=\dots=a_F=1,$$
$$a_1=k^2-(F-1), a_2=a_3=\dots=a_F=1,$$
with the corresponding values
$$4\sqrt{\frac{15}{16}k^2} + 4\sqrt{\frac{1}{16}k^2-(F-2)}+4(F-2) = (\sqrt{15}+1)k + 4(F-2)-O\left(\frac{F}{k}\right),$$
and
$$4\sqrt{k^2-(F-1)}+\frac{8}{k}\left( \frac{1}{16}k^2-(F-1)\right)+4(F-1) = \frac{9}{2}k+4(F-1)-O\left(\frac{F}{k}\right).$$
Hence, the minimum in this case is $\frac{9}{2}k+4(F-1)-O(\frac{F}{k})$.

Note that for $F-1 \geq \frac{1}{16}k^2$, we have $\frac{1}{2}k = O(\frac{F}{k})$. Hence, for all $F$, we can write the minimum as $\frac{9}{2}k + 4(F-1)-O(\frac{F}{k})$.

In conclusion, we have $2E+4k \geq \frac{9}{2}k+4(F-1) - O(\frac{F}{k})$. Hence $E-2F \geq \frac{1}{4}k-2-O(\frac{F}{k})$, where clearly $F=O(E)$.
\end{proof}

Using Lemma \ref{lemma:nobigbox}, we can bound \eqref{eq:case2} by
\begin{align*}
O_\varepsilon(k^4 M^{(4+\frac{1}{4})k-2} n^{2-\frac{k}{4}}) \sum_{E=0}^\infty \left(\frac{ (2n/M)^{O(\frac{1}{k})}}{1+\frac{\varepsilon}{3}}\right)^{E}.
\end{align*}
As $k=\omega(\ln n)$, $(2n/M)^{O(\frac{1}{k})}\rightarrow 1$ as $n\rightarrow\infty$. Hence the sum is at most $$O_\varepsilon(k^4 M^{(4+\frac{1}{4})k-2} n^{2-\frac{k}{4}}),$$ which tends to $0$ as $n\rightarrow\infty$. Again, we can conclude that, with high probability, no such partial assemblies exist.
\\~\\
\textbf{Case 3:} All vertices in the contour graph have distance $<\frac{k}{2}$ from the boundary.\\

For any such $A\in\mathcal{A}$ there is a large connected region in the assembly that contains the $(n-2k)\times(n-2k)$ square of all jigsaw pieces at distance at least $k$ from the boundary. Consider the area in the planted assembly that corresponds to this square. In order to fit in the planted assembly, this area must cover all jigsaw pieces that are not in the $2k$ outermost layers. As a consequence of this, any jigsaw piece in the $k$ outermost layers of $A$ must come from the $2k$ outermost layers of the planted assembly.

Consider any $3\times 3$ window in $A$ with a non-trivial contour graph $C$. Let again $S$ be the boundary of the window, $E$ the number of edges of $C$, and $F$ the number of bounded faces of $C\cup S$. As this window is at distance $\leq k$ from the boundary, it is part of a $k\times k$ window that touches the boundary. Hence, by the definition of $k$-good, $sm(C, \omega)\leq 1$. Furthermore, by the above reasoning, all connected regions come from the $2k$ outermost layers of the planted assembly.

As before we apply Markov's inequality to the number of such partial assemblies that can be constructed from the random jigsaw puzzle. There are at most $4\cdot 8kn$ ways to choose the original orientation and position of each connected region, hence, by Lemma \ref{lemma:cfi}, the expected number of such assemblies corresponding to a fixed $C$ is at most
$$ (32kn)^F q^{-E} + O(1) (32 kn)^{F-1} q^{-E+1} = \left(1+O\left(\frac{1}{k}\right)\right) (32 kn )^F O(n^{-E}).$$
We note that if $E>F$, this is bounded by $O(k^F n^{F-E})=O(\frac{k^9}{n})\rightarrow 0$ as $n\rightarrow\infty$. Hence, with high probability, the only $3\times 3$ windows that can occur in any $k$-good feasible $A\in\mathcal{A}$ are those where either the contour graph is empty, or possibly those where the contour graph satisfies $E\leq F$.

\begin{lemma}
The only possible $C$ where $E\leq F$ are the ones where $C$ consists of exactly two edges that separate a corner from the rest of the $3\times 3$ window.
\end{lemma}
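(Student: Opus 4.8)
The plan is to reduce everything to the combinatorics of the $4\times 4$ grid $H$ of corner points underlying the $3\times 3$ window. Write $S$ for the boundary square of $H$, a $12$-cycle on $12$ vertices; the remaining $12$ edges of $H$ are the eight \emph{spokes} (each joining one of the four central vertices to a boundary vertex) and the four \emph{central} edges (joining two central vertices), and $C$ is a subgraph of these twelve. Let $v\in\{0,1,2,3,4\}$ be the number of central vertices incident to $C$, and let $b$ be the number of connected components of $C$ that avoid $S$. Since every non-boundary edge of $H$ has a central endpoint, $C\cup S$ has $12+v$ vertices, $12+E$ edges, and $1+b$ connected components, so Euler's formula gives $F=E-v+1+b$; hence the hypothesis $E\le F$ is \emph{equivalent} to $v\le b+1$.

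Next I would record what it means for $C$ to be a genuine non-trivial contour graph: $C$ is non-empty, and every edge of $C$ separates two distinct connected regions of the window, hence two distinct faces of $C\cup S$, so no edge of $C$ is a bridge of $C\cup S$; in particular every central vertex incident to $C$ has $C$-degree at least $2$. I would then dispose of the possibility $b\ge 1$: a component of $C$ avoiding $S$ is bridgeless and non-empty, hence contains a grid cycle, and the only grid cycle disjoint from $S$ is the $4$-cycle through all four central vertices (formed by the four central edges, since any cycle using a boundary vertex must use an edge of $S$). This forces $v=4$ and $b=1$, so $v>b+1$ and $E>F$; this case is anyway excluded since $A\in\mathcal A$. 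Thus $b=0$ and $v\le 1$.

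It remains to analyse $v\le 1$ with $b=0$. If $v=0$ then $C$ uses only boundary vertices, but no non-boundary edge of $H$ joins two boundary vertices, so $C=\emptyset$, contradicting non-triviality. Hence $v=1$; let $p$ be the unique central vertex met by $C$. No central edge lies in $C$ (it would need a second central vertex), and no spoke at a central vertex other than $p$ lies in $C$, so $C$ is contained in the set of exactly two spokes at $p$; the degree constraint at $p$ then forces $C$ to consist of \emph{both} of them. Together with two edges of $S$ these bound the corner piece incident to $p$ and separate it from the rest of the window, which is the asserted form; conversely this $C$ has $E=2=F$, so it indeed occurs.

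The only delicate point — and the one I would phrase with care — is the claim that no edge of $C$ is a bridge of $C\cup S$, i.e. that each contour edge genuinely lies between two distinct faces. This uses the definition of the contour graph (an edge lies in $C$ precisely when its two sides belong to different connected regions, and within a window these regions are exactly the faces of $C\cup S$), and without it a single spoke would spuriously satisfy $E\le F$. Everything else is bookkeeping with Euler's formula on a graph with at most sixteen vertices.
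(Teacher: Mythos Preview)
Your proof is correct and follows essentially the same Euler-characteristic argument as the paper: both reduce $E\le F$ to the statement that $C$ meets exactly one interior vertex of the $4\times 4$ grid (your $v\le 1$, the paper's $V=13$), after first disposing of the disconnected case $C\cup S$ as the central $4$-cycle. Where the paper simply asserts that $V=13$ ``clearly only happens'' when $C$ cuts off a corner, you supply the justification explicitly via the no-bridge property of contour edges (forcing each interior vertex in $C$ to have $C$-degree at least~$2$), which is exactly what is needed to rule out a single dangling spoke.
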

\begin{proof}
It is clear that the only possible way for $C\cup S$ not to be connected is if $C$ consists of four edges surrounding the center jigsaw piece, which by choice of $A$ is not possible. If $C\cup S$ is connected, then its Euler characteristic gives that $E-F = V-13,$ where $V$ denotes the number of vertices of $C\cup S$. Note that there are always $12$ vertices on the boundary of $C\cup S$, and, since $C$ is not empty, it must contain at least one additional vertex. Hence, the only possibility for $E\leq F$ is when $V=13$, which clearly only happens if $C$ consists of two edges that separate a corner piece from the rest of the $3\times 3$ window.
\end{proof}

Now, consider an $A\in\mathcal{A}$ with the property that the contour graph of each $3\times 3$ window is either empty, or consists of two edges that cut out a corner piece. In the latter case, if we can shift the window one step either vertically or horizontally towards that corner piece, then we would obtain a window with some other non-trivial contour graph. Clearly, the only case where this would not lead to a contradiction is if the cut out piece is one of the four corner pieces of $A$. Hence $A$ can only differ from the planted assembly by a global rotation and reordering of the the four corner pieces.

In conclusion, with high probability, no $A\in\mathcal{A}$ is both feasible and $k$-good except possibly those that only differ from the planted assembly by the positions of the four corner pieces. But, with high probability, the 16 sides of the corner pieces all have different jig shapes, and thus no such reordering is feasible either, as desired. \qed

\section*{Acknowledgement}
This research was partially supported by a grant from the Swedish Research Council. I thank my former supervisor, Peter Hegarty, and the anonymous referee for their thorough reading and valuable comments.

\section*{Note}

After this article was submitted, Balister, Bollob\'{a}s, and Narayanan \cite{BBN17} announced independent work on a very closely related model. Their main result is analogous to Theorem \ref{thm:main} apart from a less optimized constant.


\begin{thebibliography}{9}

\bibitem{BBN17} Balister, P., Bollob\'as, B., Narayanan, B., \textit{Reconstructing random jigsaws} (2017+), available at \href{http://arxiv.org/abs/1707.04730}{http://arxiv.org/abs/1707.04730}

\bibitem{BFM16} Bordenave, C., Feige, U., and Mossel, E., \textit{Shotgun assembly of random jigsaw puzzles} (2016+), available at \href{http://arxiv.org/abs/1605.03086}{http://arxiv.org/abs/1605.03086}

\bibitem{BDDHMY16+} Bosboom, J., Demaine, E.D., Demaine, M.L., Hesterberg, A., Manurangsi, P., and Yodpinyanee, A., \textit{Even $1\times n$ Edge-Matching and Jigsaw Puzzles are Really Hard}, Journal of Information Processing 25 (2017), 682--694.

\bibitem{DD07} Demaine, E.D. and Demaine, M.L. \textit{Jigsaw Puzzles, Edge Matching, and Polyomino Packing: Connections and Complexity}, Graphs and Combinatorics 23 (2007), Suppl 1, 195--208

\bibitem{M16} Martinsson, A., \textit{Shotgun edge assembly of random jigsaw puzzles} (2016+), available at \href{http://arxiv.org/abs/1605.07151}{http://arxiv.org/abs/1605.07151}

\bibitem{MR15} Mossel, E. and Ross, N., \textit{Shotgun assembly of labeled graphs} (2015+), available at \href{http://arxiv.org/abs/1504.07682}{http://arxiv.org/abs/1504.07682}

\bibitem{NPS16} Nenadov, R., Pfister, P., and Steger, A., \textit{Unique reconstruction threshold for random jigsaw puzzles}, Chicago journal of Theoretical Computer Science (2017).


\bibitem{M07} Description of Eternity II release, PR Newswire, 16 February 2007, \href{http://www.prnewswire.co.uk/news-releases/eternity-puzzle-is-back-this-summer-with-a-us2-million-prize-for-the-first-person-to-find-a-solution-153539245.html}{http://www.prnewswire.co.uk/news-releases/eternity-puzzle-is-back-this-summer-with-a-us2-million-prize-for-the-first-person-to-find-a-solution-153539245.html}
\end{thebibliography}
\end{document}